\newtheorem{theorem}{Theorem}
\newtheorem{lemma}{Lemma}
\newtheorem{example}{Example}
\newcommand{\mbold}[1]{\mbox{\boldmath $#1$}}
\newcommand{\balpha}{\boldsymbol{\alpha}}
\newcommand{\bd}{{\bf \Delta}}
\newcommand{\dd}{{\boldsymbol \delta}}
\newcommand{\ld}{\Delta}
\newcommand{\E}{\mathbb{E}}
\newcommand{\I}{\mathbb{I}}
\newcommand{\dt}{{\boldsymbol\theta}}
\newcommand{\BL}{\textcolor{black}}
\newcommand{\Ba}{\textcolor{black}}
\newcommand{\Bb}{\textcolor{black}}
\newcommand{\Bc}{\textcolor{black}}
\begin{document}

{\noindent Draft of  \today.}
\begin{center}
      \Large{\bf\sc Information content of partially rank-ordered set samples}
\end{center}
\begin{center}
 \noindent{ {\sc Armin Hatefi}$^{\dagger}$ and
            {\sc Mohammad Jafari Jozani}}\footnote{\footnotesize{Corresponding author. \\ 
             Email: 
            m\_jafari\_jozani@umanitoba.ca.}}$^{,\ddagger}$  
            
\noindent{\footnotesize{\it  ${\dagger}$  The Fields Institute for Research in Mathematical Sciences \& Department of Statistical Sciences, University of Toronto.} }\\

\noindent{\footnotesize{\it ${\ddagger}$ Department of Statistics,  University of Manitoba, 
                        Winnipeg, MB, Canada, R3T 2N2. } }\\
\end{center}

\begin{center} {\small \bf Abstract}:  \end{center}

\footnotesize{
Partially rank-ordered set (PROS) sampling is a generalization of ranked set sampling  in which  rankers are not required to  fully rank the sampling units in each set, hence having more flexibility to perform  the necessary judgemental ranking process. \Bc{The PROS sampling} has a wide range of  applications in different fields  ranging from environmental and ecological studies to medical research and it has been shown to be superior over ranked set sampling and simple random sampling for estimating  the population mean. 
\Bc{
We study the Fisher information content   and uncertainty structure  of the  PROS 
samples
and compare them with those of simple random sample 
(SRS) and  ranked set sample (RSS) counterparts of the same size from the underlying population.
We study
the uncertainty structure in terms of the Shannon entropy,  R\'enyi entropy 
and Kullback-Leibler (KL) discrimination measures. Several examples including the FI of PROS  samples from  the  location-scale family of distributions as well as a regression model are discussed. 
}
}\\

\noindent {\bf AMS 2010 Subject Classification:} 62B10, 62D05, 62E15, 62F99 \& 62J99.

\noindent {\bf Keywords and phrases:} 
\Bc{Fisher information, Shannon entropy, R\'enyi entropy, Kullback-Leibler information, 
Misplacement probability matrix.}

\normalsize

\section{ Introduction } \label{sec:intro}

Ranked set   sampling is a 
powerful and  cost-effective  data collection  technique which can be used  to obtain more representative samples from the underlying population when  a small number of sampling units can be fairly accurately ordered with respect to a variable of interest without actual measurements on them and at little cost. It is assumed that the exact measurement of the variable of interest is very costly  but  ranking sampling units  is cheap. 
Ranked set sampling has many applications  in  industrial statistics, 
environmental and ecological studies as well as  medical research.  Some recent examples include estimating phytomass \citep{muttlak}, stream habitat area \citep{mode},  mean and variance in flock management \citep{ozturk-bilgin}
  as well as studying 
the association between smoking exposure and three important carcinogenic biomarkers 
in a lung cancer decease study  \citep{Chen-Wang-04-Biom} and  in a fishery research  for 
estimating the mean stock abundance using the catch-rate data available from previous 
years as a concomitant variable \citep{Wang}. For recent  overviews of the theory and applications of  ranked set sampling and some of its variations  see \citet{Wolfe-12} and \cite{chen-bai-sinha}.
 
To obtain a ranked set sample (RSS),  an initial simple random sample (SRS) of size $k$ is taken. These units are ordered, but without actually being measured; we call this judgement ranking, which may be perfect or imperfect. Upon ranking, only the smallest unit is measured. Following this, a second SRS of size $k$ is taken, ranked and the second smallest unit  is measured. This process is repeated until the largest unit in a SRS of size  $k$  has been measured.   
In this process, the ranker is asked to declare unique ranks  for each  unit inside 
the sets. There are many situations  where it is difficult to rank all of the sampling units in a set with high confidence, particularly when subjective information is utilized in the ranking process.   Forcing rankers to declare unique ranks can  lead to inflated within-set judgment ranking 
error and consequently  to 
invalid statistical inference. 
Partially rank-ordered set (PROS) sampling design  is a  generalization of ranked set sampling, due to \cite{Oztu-11-ees}, which is aimed at reducing the  impact of ranking error and the burden on  rankers by not requiring them to provide  a full ranking of all the units in each set. Under PROS sampling technique,  rankers  have more flexibility by being able to  
divide the sampling units  into  subsets of pre-specified sizes. 
These subsets are 
partially rank-ordered so that each  unit in subset $h$ has a rank smaller  than the ranks of   units in  subset $h^{'}$  for all $ h^{'}\geq h$. 
An observation is then collected from one of these 
subsets in each set.   \cite{Hate-Joza-Oztu-13} used  PROS sampling design   to estimate  the parameters of a finite mixture model  to analyze the  age structure of a fish species.  
\cite{frey} studied nonparametric mean estimation using PROS sampling design. \cite{CJS:CJS11167}  proposed statistical procedures that utilize PROS  data from multiple observers to assist in the selection of units for measurement in a basic ranked set sample design or to construct a judgment post-stratified design. 

In this paper,  we study    information  and uncertainty content of     PROS 
samples. 
To this end, in Section \ref{se:pros-sam},  we provide a formal description of PROS sampling and present  some preliminary results on  distributional   properties  of PROS samples. 
 In Section \ref{sec:fi-pros}, we obtain   the Fisher information  (FI) content of  PROS samples and  show that it is  more than the FI content of its SRS and RSS counterparts of the same size.   Several examples including the FI of PROS  samples from  the  location-scale family of distributions as well as  a simple linear regression model are also discussed in this section. In addition, the effect of subsetting errors when applying PROS sampling design on the FI content of  samples is explored.  In Section \ref{sec:oic}, we study  information and uncertainty of  PROS samples   using the Shannon 
entropy, \BL{  R\'enyi} entropy and KL information measures and compare them with their SRS and RSS counterparts. Finally, in Section \ref{concluding}, we give some concluding remarks.

\section{Preliminary results on distributional properties of PROS samples}\label{se:pros-sam}

%

To obtain a PROS sample of size $n$,  we   choose a set size $S$ and a design parameter  $D=\{d_1, \ldots, d_n\}$ that partitions the set  $\{1,\ldots,S\}$ into $n$
mutually exclusive subsets. First,  $S$ units are randomly selected 
and are assigned into subsets $d_r,r=1,\ldots,n$,    without actual 
measurement of the variable of interest and only based on visual inspection or judgment,  etc.
Then a unit is selected at random for measurement from the subset $d_1$ and it  is denoted 
by \BL{ $X_{(d_1)1}$}. Selecting another $S$ units assigning them 
into subsets, a unit is randomly drawn from subset $d_2$ and then  it is quantified and  denoted by $X_{(d_2)1}$. This process is repeated until we randomly draw a unit \BL{  from}   $d_n$ resulting in $X_{(d_n)1}$.
This constitutes  one  \textit{cycle} of PROS sampling technique. The cycle is then 
repeated $N$ times to generate a PROS sample of the size $Nn$, i.e. $\{X_{(d_r)i};r=1,\ldots,n;i=1,\ldots,N\}$.
 Table \ref{ta:pros-ex} shows the construction of a balanced PROS sample with  \Ba{$S=6,n=2, N=2$} and the  design parameter
$D=\{d_1,d_2\}=\{\{1,2,3\},\{4,5,6\}\}$. Each set includes  
six units assigned into two partially ordered subsets. This partial ordering provides
the information that the units in $d_1$ have the smaller ranks than units in $d_2$. In this subsetting process we do  not assign any ranks to units 
within each  subset  so that these units  are equally likely to take any place in the
subset.  One unit, in each set from the bold faced subset, is randomly drawn and is quantified. The fully measured 
units are denoted by $X_{(d_r)i}$, $r=1, 2$; $ i=1, 2$. 
\begin{table}[h!]
\begin{center}
\caption{\small An example of PROS design}
\small{\begin{tabular}{cccc} \hline\hline
cycle & set &  Subsets & Observation \\ \hline
 1    & $S_1$ & $D_1=\{\mbold{d_1},d_2\}=\{ \mbold{\{1,2,3\}},\{4,5,6\}\}$ & $X_{(d_1)1}$  \\ 
      & $S_2$ & $D_2=\{d_1,\mbold{d_2}\}=\{ \{1,2,3\},\mbold{\{4,5,6\}} \}$ & $X_{(d_2)1}$  \\\hline
     2    & $S_1$ & $D_1=\{\mbold{d_1},d_2\}=\{ \mbold{\{1,2,3\}},\{4,5,6\} \}$ & $X_{(d_1)2}$  \\
      & $S_2$ & $D_2=\{d_1,\mbold{d_2}\}=\{ \{1,2,3\},\mbold{\{4,5,6\}} \}$ & $X_{(d_2)2}$  \\\hline\hline
 \end{tabular}}
  \label{ta:pros-ex}
 \end{center}  
 \end{table}    
 
Throughout the paper,  without loss of generality, we assume that $N=1$ (unless otherwise specified) and we use \BL{  PROS($n, S)$} to denote a PROS sampling  design  with the set size $S$, the number of subsets $n$ and the design parameter $D=\{ d_r, r=1, \ldots, n \}$ where   $d_r=\{(r-1)m+1,\ldots, rm\}$,  in which  $m=S/n$ is the number of  
 unranked observations in each  subset.
 We note that RSS and SRS can be expressed as special cases of the \BL{  PROS($n, S)$}
 design  when $S=n$ and $S=1$, respectively.
 
Suppose  $X$ is  a continuous random variable with probability density function (pdf) $f(x;\dt)$
and cumulative density function (cdf) $F(x;\dt)$, where  $\dt$ is  the vector  of unknown parameters with   $\dt \in \mathbb{R}^{p}$. \BL{ 
Let ${\bf X}_{pros}=\{ X_{(d_r)}, r=1, \ldots, n\}$  be a perfect PROS($n, S$) sample of size $n$ from $f(\cdot, \dt)$}. The PROS data  likelihood function of \BL{  $\dt$} is given by the joint pdf of ${\bf X}_{pros}$ as follows:
\begin{eqnarray*}\label{l-pros-in}
L(\dt|{\bf x}_{pros})= f({\bf x}_{pros}; \dt)= \prod_{r=1}^{n}
\left\{\frac{1}{m} \sum_{u \in d_r}  f^{(u:S)}({x_{(d_r)}};\dt)\right\}, 
\end{eqnarray*} 
where $f^{(u:S)}(\cdot;\dt)$ is the pdf of the $u$-th order statistic of a SRS of size $S$ from $f(\cdot; \dt)$. 
For each  $X_{(d_r)}$  define the  latent vector  
$\bd^{(d_r)}= \left(\ld^{(d_r)}(u), u\in \Bc{d_r= \{(r-1)m+ 1, \ldots, rm\}}\right)$,   where 
\[  
\ld^{(d_r)}(u) = 
\left\{
       \begin{array}{ll}
       1 & \mbox{if  $X_{(d_r)}$ is selected from the $u$-th position within the subset $d_r$}; \\
       0 & \mbox{otherwise},
       \end{array} 
\right. 
\]
with  $\sum_{u \in d_r} \ld^{(d_r)}(u)=1$. Denote 
${\bf Y}_{pros}= \{ (X_{(d_r)},\bd^{(d_r)}),r=1,\ldots,n \} $ as the complete PROS data  consisting of  $X_{(d_r)}$ and their corresponding latent vectors $\bd^{(d_r)}$, $r=1,\ldots,n$.
 The  complete PROS  data likelihood function of $\dt$  using  the joint pdf of ${\bf Y}_{pros}$ is  given by 
\begin{eqnarray}\label{l-pros}
L(\dt|{\bf y}_{pros})=  f({\bf y}_{pros}; \dt)= \prod_{r=1}^{n}\prod_{u \in d_r} 
\left\{\frac{1}{m} f^{(u:S)}({x_{(d_r)}};\dt)\right\}^{\dd^{(d_r)}(u)}.
\end{eqnarray}
%
Furthermore,  by summing the  joint distribution of $(X_{(d_r)},\bd^{(d_r)})$ over  $\bd^{(d_r)}=\dd^{(d_r)}$,    the marginal distribution of $X_{(d_r)}$ is obtained  as follows
\begin{align}\label{f-per-x}
      f_{(d_r)}(x_{(d_r)};\dt) 
      =   \sum_{\dd^{(d_r)}} 
            f(x_{(d_r)},\dd^{(d_r)};\dt) 
      =   \frac{1}{m} \sum_{u \in d_r} 
            f^{(u:S)}({x_{(d_r)}};\dt).
\end{align}
Also, one can easily check that 
\begin{align}\label{f-x}
\frac{1}{n}\sum_{r=1}^{n}f_{(d_r)}(x;\dt)
=   f(x;\dt).
\end{align}
In addition,  the conditional distribution of $\bd^{(d_r)}$ given $X_{(d_r)}$ is 
\begin{align}\label{f-per-p|x}
      f(\dd^{(d_r)}\big|x_{(d_r)};\dt)=
       \prod_{u \in d_h} 
      \left\{
      \frac{ f^{(u:S)}({x_{(d_r)}};\dt)}
      {\displaystyle\sum_{\BL{ v} \in d_r} 
       f^{(\BL{ v}:S)}({x_{(d_r)}};\dt) }
      \right\}^{\dd^{(d_r)}(u)}.
\end{align}

\section{FI content of  PROS samples }\label{sec:fi-pros}

In this section,  we  first obtain the FI content of  ${\bf Y}_{pros}$,  the complete PROS data,  and derive analytic results to compare it  with   the FI content of 
SRS and RSS data  of the same size. We give examples regarding   the  location-scale
family of distributions as well as a simple linear regression model. Then,  we study the FI content of  ${\bf X}_{pros}$ by modelling an imperfect PROS design involving misplacement errors in the subsetting process.    The FI of PROS samples can play a key role in its  theory and application  to study  the asymptotic behaviour of  the  maximum likelihood estimators of $\dt$  as well as the derivation of the  Cramer-Rao 
lower bound for unbiased estimators of $\dt$ or some of its functions   based on PROS samples.

  Under the usual regularity conditions \BL{  (e.g., \citealp{chen-bai-sinha})}, the FI matrix is calculated by 
$
\I(\dt)=-\E[D_{\dt}^2\log f(X;\dt)],
$
provided the expectation exists, where $D_{\dt}^l$ refers to the $l$-th derivatives  of the 
log-likelihood function with respect to $\dt$ with $D_{\dt}^1=D_{\dt}$. For any two matrices  $A$ and $B$ of the same size,  we use $A\ge0$ and $A\ge B$  
 to  indicate that $A$ and $A-B$ are \BL{  non-negative definite} matrices. We also let   $ \phi_u(\lambda)=(u-1)\, I(\lambda=0)+(S-u)\,I(\lambda=1)$ with $\lambda\in\{0,1\}, u=1, \ldots, S$,  
  where $I$ is the usual indicator function.
 
%

\subsection{ FI matrix of complete PROS data ${\bf Y}_{pros}$} \label{subsec:fis}
 
\BL{ 
Here we obtain the FI matrix of ${\bf Y}_{pros}$ under perfect subsetting assumption. To do so, we need  the following useful result.
}

\begin{lemma}\label{le:chen-pros}
Suppose  $Y_r=X_{(d_r)}$,  with pdf $f_{(d_r)}(\cdot;\dt)$,  is observed from  a continuous distribution with pdf $\Bc{f(\cdot; \dt)}$ and cdf $F(\cdot; \dt)$, respectively,  using a  \BL{ { \rm PROS($n, S$)}}  design.  Let $\dd^{(d_r)}(u)$ be  the 
 latent variable associated with $X_{(d_r)}$. For 
 any $\lambda\in\{0,1\}$ and any function $G(\cdot)$,
\begin{eqnarray*}
\E\left\{
\sum_{r=1}^{n} \sum_{u \in d_r}  \frac{ \phi_u(\lambda) \,\dd^{(d_r)}(u)\, G(Y_r)}{\lambda+(1-2\lambda)\, F(Y_r;\dt)}
\right\}
= n(S-1) \E[G(X)],
\end{eqnarray*}
subject to the existence of the expectations.
\end{lemma}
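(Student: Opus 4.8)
The plan is to evaluate the expectation directly from the complete-data likelihood (\ref{l-pros}) and to reduce each order-statistic density of size $S$ to one of size $S-1$, so that the averaging identity (\ref{f-x}) collapses the resulting sum.

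First I would move the expectation inside the double sum and use the fact that, by (\ref{l-pros}), on the configuration $\{\dd^{(d_r)}(u)=1\}$ the pair $(X_{(d_r)},\bd^{(d_r)})$ has density $\tfrac{1}{m}f^{(u:S)}(\cdot;\dt)$. Writing $Y_r=X_{(d_r)}$ this gives
\[
\E\left\{\sum_{r=1}^{n}\sum_{u\in d_r}\frac{\phi_u(\lambda)\,\dd^{(d_r)}(u)\,G(Y_r)}{\lambda+(1-2\lambda)F(Y_r;\dt)}\right\}
=\frac{1}{m}\sum_{u=1}^{S}\phi_u(\lambda)\int\frac{G(y)\,f^{(u:S)}(y;\dt)}{\lambda+(1-2\lambda)F(y;\dt)}\,dy,
\]
where I have used that $\{d_r\}_{r=1}^{n}$ partitions $\{1,\dots,S\}$ to merge the double sum into a single sum over $u$. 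The interchange of expectation and integration is justified by the assumed existence of the expectations.

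Next I would substitute $f^{(u:S)}(y;\dt)=\frac{S!}{(u-1)!(S-u)!}F(y)^{u-1}[1-F(y)]^{S-u}f(y)$ and treat the two values of $\lambda$ separately. For $\lambda=0$ the weight $\phi_u(0)=u-1$ cancels one factor of $F(y)$ in the denominator and is absorbed into the Beta coefficient, since $(u-1)\frac{S!}{(u-1)!(S-u)!}=S\frac{(S-1)!}{(u-2)!(S-u)!}$; hence $\tfrac{\phi_u(0)}{m}\,f^{(u:S)}(y)/F(y)=\tfrac{S}{m}f^{(u-1:S-1)}(y)$ for $u\ge 2$, while the $u=1$ term vanishes. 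Symmetrically, for $\lambda=1$ the weight $\phi_u(1)=S-u$ cancels one factor of $1-F(y)$ and yields $\tfrac{\phi_u(1)}{m}\,f^{(u:S)}(y)/(1-F(y))=\tfrac{S}{m}f^{(u:S-1)}(y)$ for $u\le S-1$, the $u=S$ term vanishing. Reindexing by $j=u-1$ when $\lambda=0$ and by $j=u$ when $\lambda=1$, in either case the sum becomes $\tfrac{S}{m}\sum_{j=1}^{S-1}f^{(j:S-1)}(y)$.

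Finally, applying the order-statistic averaging identity $\frac{1}{S-1}\sum_{j=1}^{S-1}f^{(j:S-1)}(y;\dt)=f(y;\dt)$, which is the specialization of (\ref{f-x}) to the RSS case at sample size $S-1$, the integral reduces to $\tfrac{S(S-1)}{m}\int G(y)f(y;\dt)\,dy=\tfrac{S(S-1)}{m}\E[G(X)]$; substituting $m=S/n$ then yields $n(S-1)\E[G(X)]$, as claimed. The main obstacle is the combinatorial bookkeeping in the previous step, namely the cancellation of the Beta coefficient together with the shift of the order-statistic index from size $S$ to size $S-1$, and checking that the boundary terms ($u=1$ for $\lambda=0$ and $u=S$ for $\lambda=1$) drop out so that both values of $\lambda$ produce the same sum of $S-1$ order-statistic densities.
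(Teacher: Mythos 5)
Your proposal is correct and follows essentially the same route as the paper: both reduce the expectation, via the joint law of $(X_{(d_r)},\bd^{(d_r)})$, to $\frac{1}{m}\sum_{u}\phi_u(\lambda)\int G\, f^{(u:S)}/[\lambda+(1-2\lambda)F]$ and then evaluate the sum over $u$ by an elementary binomial/order-statistic identity. The only (cosmetic) difference is that you collapse the sum by shifting to order-statistic densities of sample size $S-1$ and using the averaging identity, whereas the paper recognizes the same sum as the first moment of a $\mathrm{Bin}(S-1,F(x))$ variable divided by $F(x)$.
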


\begin{proof}  
Let  $\lambda=0$.  By the  total law of expectations and  equation  \eqref{f-per-p|x} we get
\begin{align*}
\E
\left\{
\sum_{r=1}^{n} \sum_{u \in d_r} (u-1) \frac{\dd^{(d_r)}(u) \, G(Y_r)}{F(Y_r;\dt)}
\right\}
&= \frac{1}{m}\sum_{r=1}^{n}\sum_{u \in d_r} (u-1) \int \frac{G(x)}{F(x;\dt)} f^{(u:S)}(x;\dt) dx\\
&=  \frac{S}{m} \int G(x)  f(x;\dt) 
\left\{ \sum_{v=1}^{S} (v-1)
 {{S-1}\choose{v-1}} [F(x;\dt)]^{v-2} [{\bar F}(x;\dt)]^{S-v}
\right\} dx \\
&= n(S-1) \E [G(X)],
\end{align*}
The proof for $\lambda=1$ is similar and \Bc {hence is omitted}. 
\end{proof}

\noindent  Now, we obtain the FI content of ${\bf Y}_{pros}$ and compare it with its  SRS counterpart  of the same size.  
\begin{theorem}\label{th:fi-pros-srs}
Under  the usual regularity conditions \BL{  {\rm (e.g., \citealp{chen-bai-sinha})}}, the  FI matrix  of a complete \BL{ {\rm PROS($n, S$)}} sample  of size $n$  from $f(\cdot; \dt)$  is given by 
\[
\I_{pros}(\dt)=\I_{srs}(\dt)+ \mathbb{K}(\dt),
\]
where  $\I_{srs}(\dt)$ denotes 
the FI matrix of  a SRS of size $n$,  
\[
\mathbb{K}(\dt)=n(S-1) 
\E\left\{
\frac{[D_{\dt}F(X;\dt)][D_{\dt}F(X;\dt)]^{\top}}{F(X;\dt) {\bar F}(X;\dt)}
\right\},
\]
is a  non-negative definite matrix
and the expectation is taken with respect to $X$. 
\end{theorem}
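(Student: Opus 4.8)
The plan is to differentiate the complete-data log-likelihood twice and then reduce every resulting expectation to an integral against the base density $f(\cdot;\dt)$ by means of Lemma \ref{le:chen-pros}. Starting from \eqref{l-pros}, the additive form of $\log L(\dt\mid{\bf y}_{pros})$ lets me write the observed information as a sum over $r$ and $u$ weighted by the latent indicators $\dd^{(d_r)}(u)$; the factors $1/m$ and the combinatorial constants in $f^{(u:S)}$ are free of $\dt$ and drop out under differentiation. The first substantive step is to use the order-statistic decomposition $\log f^{(u:S)}(x;\dt)=c_u+(u-1)\log F(x;\dt)+(S-u)\log{\bar F}(x;\dt)+\log f(x;\dt)$ together with $D_{\dt}{\bar F}=-D_{\dt}F$, and then take $D_{\dt}^2$ of each logarithmic term. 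For each pair $(r,u)$ this produces a contribution built from $D_{\dt}^2 F$, the outer product $(D_{\dt}F)(D_{\dt}F)^{\top}$, and $D_{\dt}^2\log f$.

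First I would isolate the $\log f$ term. Since $\sum_{u\in d_r}\dd^{(d_r)}(u)=1$, its contribution to $-\E[D_{\dt}^2\log L]$ collapses to $-\sum_{r=1}^n\E[D_{\dt}^2\log f(Y_r)]$, where $Y_r$ has marginal density $f_{(d_r)}$. Writing this as an integral and invoking the mixture identity \eqref{f-x}, namely $\tfrac1n\sum_r f_{(d_r)}=f$, the term equals $-n\,\E_X[D_{\dt}^2\log f(X)]=\I_{srs}(\dt)$, the Fisher information of a SRS of size $n$. This accounts for the first summand of the claimed identity.

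The remaining four terms are where Lemma \ref{le:chen-pros} does the essential work, and matching them to the two admissible values of $\lambda$ is the part that needs the most care. The two Hessian-of-$F$ terms carry weights $(u-1)$ over $F$ and $(S-u)$ over ${\bar F}$; applying Lemma \ref{le:chen-pros} with $G=D_{\dt}^2F$ at $\lambda=0$ and $\lambda=1$ respectively turns each into $n(S-1)\,\E[D_{\dt}^2F(X)]$ with opposite signs, so they cancel exactly. The two outer-product terms have $F^2$ and ${\bar F}^2$ in the denominator; to bring them into the form of Lemma \ref{le:chen-pros} I would absorb one factor into $G$, taking $G=(D_{\dt}F)(D_{\dt}F)^{\top}/F$ at $\lambda=0$ and $G=(D_{\dt}F)(D_{\dt}F)^{\top}/{\bar F}$ at $\lambda=1$ (applying the lemma entrywise). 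Each then yields $n(S-1)\,\E[(D_{\dt}F)(D_{\dt}F)^{\top}/F]$ and $n(S-1)\,\E[(D_{\dt}F)(D_{\dt}F)^{\top}/{\bar F}]$, and the partial-fractions identity $1/F+1/{\bar F}=1/(F{\bar F})$ (valid since $F+{\bar F}=1$) combines them into exactly $\mathbb{K}(\dt)$.

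Non-negative definiteness of $\mathbb{K}(\dt)$ is then immediate: for any vector $a$, $a^{\top}\mathbb{K}(\dt)a=n(S-1)\,\E[(a^{\top}D_{\dt}F(X))^2/(F(X){\bar F}(X))]\ge0$ because $n\ge1$, $S\ge1$ and $F{\bar F}>0$. The regularity conditions are what justify interchanging $D_{\dt}^2$ with the expectation and guarantee the integrals exist, so the whole calculation is legitimate. I expect the only real obstacle to be bookkeeping, namely keeping the four weighted terms aligned with the correct $\lambda$ and sign so that the Hessian-of-$F$ terms cancel while the gradient outer-product terms survive and combine, rather than any genuine analytic difficulty, since Lemma \ref{le:chen-pros} has already absorbed the hard step of evaluating the PROS expectations.
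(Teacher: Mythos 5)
Your proposal is correct and follows essentially the same route as the paper: both split the complete-data log-likelihood into the $\log f$ part (which, via the mixture identity \eqref{f-x}, yields $\I_{srs}(\dt)$) plus the $(u-1)\log F+(S-u)\log\bar F$ part, and then apply Lemma \ref{le:chen-pros} with $G=D_{\dt}^2F$ (the two terms cancel) and with $G=(D_{\dt}F)(D_{\dt}F)^{\top}/F$ and $(D_{\dt}F)(D_{\dt}F)^{\top}/\bar F$ (which combine through $1/F+1/\bar F=1/(F\bar F)$ to give $\mathbb{K}(\dt)$). The paper merely packages the two cases with the $\phi_u(\lambda)$ notation inside $\Gamma_p(\dt)$; the substance of the argument is identical.
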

\begin{proof}
Let $Y_r=X_{(d_r)}, r=1,\ldots,n$. Using \eqref{l-pros}, the log-likelihood function  of $\dt$
 can be written as
\begin{eqnarray*}
\BL{ l_{pros}(\dt) = cst + l_{srs}^* (\dt) + \Gamma_p (\dt),}
\end{eqnarray*}
\BL{  where $cst=n\log \{n \binom{S-1}{n-1} \}$  is a constant with respect to $\dt$ and}
\begin{eqnarray*}
\Gamma_p(\dt)=\sum_{r=1}^n \sum_{u \in d_r} \sum_{\lambda=0}^{1}
\phi_u (\lambda)\,\dd^{(d_r)}(u)  
\log[ \lambda+(1- \Bb{2} \lambda)\,F(y_r;  \dt)] , 
\end{eqnarray*}
and
$-\E[D_{\dt}^2 l_{srs}^*(\dt)]= \I_{srs}(\dt)$.
Taking second  derivatives of   $\Gamma_p(\dt)$ with respect to $\dt$, one gets 
\begin{eqnarray}\label{d2-gamma}\nonumber
D^2_{\dt}\Gamma_p(\dt) 
= 
\sum_{r=1}^n \sum_{u\in d_r} \sum_{\lambda=0}^{1}  \phi_u(\lambda)
\dd^{(d_r)}(u) \left\{ 
\frac{ \Bb{(-1)^{\lambda}} D^2_{\dt} F(y_r;  \dt)}{\lambda+(1- \Bb{2} \lambda)\,F(y_r;  \dt) } 
-\frac{[D_{\dt} F(y_r;  \dt)] [D_{\dt} F(y_r;  \dt)]^{\top}}{[\lambda+(1- \Bb{2} \lambda)\,F(y_r;  \dt)]^2} 
\right\}.
\end{eqnarray}
%
\Bb{ Using Lemma \ref{le:chen-pros}, 
we have
\begin{align}\label{e-gamma1}
\E
\left\{
\sum_{r=1}^{n} \sum_{u \in d_r} (u-1)  
\frac{\dd^{(d_r)} (u)\,D^2_{\dt}F(Y_r;\dt)}{F(Y_r;\dt)}
\right\}&=
\E
\left\{
\sum_{r=1}^{n} \sum_{u \in d_r} (S-u) 
\frac{\dd^{(d_r)} (u) \,D^2_{\dt}F(Y_r;\dt)}{{\bar F}(Y_r;\dt)}
\right\} \\\nonumber
&= S(S-1) 
\E\left\{
D^2_{\dt} F(X;\dt)
\right\}.
\end{align}}
\Bb{ Similarly,
 by Lemma \ref{le:chen-pros}, we obtain
\begin{align}\label{e-gamma2}\nonumber
&\E
\left\{
\sum_{r=1}^{n} \sum_{u \in d_r} 
\phi_{u}(\lambda)\,\dd^{(d_r)} (u) 
\times \frac{[D_{\dt}F(Y_r;\dt)][D_{\dt}F(Y_r;\dt)]^{\top}}{[\lambda+(1-2\lambda)F(Y_r;\dt)]^2} \right\}
\\ &=
n(S-1) 
\E
\left\{
\frac{[D_{\dt}F(X;\dt)][D_{\dt}F(X;\dt)]^{\top}}{\lambda+(1-2\lambda)F(X;\dt)}
\right\}, \quad \lambda\in\{0,1\}.
\end{align}}
Taking expectation of $D^2_{\dt}\Gamma_P(\dt)$ and from \eqref{e-gamma1} and \eqref{e-gamma2}, we obtain 
\begin{eqnarray}\label{fi-gamma}
\mathbb{K}(\dt)=-\E [D^2_{\dt} \Gamma_p(\dt)]= 
n(S-1) 
\E\left\{
\frac{[D_{\dt}F(X;\dt)][D_{\dt}F(X;\dt)]^{\top}}{F(X;\dt){\bar F}(X;\dt)}
\right\},
\end{eqnarray}
which completes the proof. 
\end{proof}

\noindent Theorem \ref{th:fi-pros-srs} shows that  the  FI matrix of the complete \BL{  PROS($n, S)$} sample  can be decomposed  into the FI matrix of the SRS data and a \BL{  non-negative definite} matrix, hence 
$\I_{pros}(\dt) \ge \I_{srs}(\dt)$. In other words,  complete PROS sample provides more information about the 
unknown parameters $\dt$ than SRS of the same size.
It is worth noting that the result of \Bc{\cite{chen2000efficiency}} and \Bb{\cite{Bara-Elsh-2001}} about FI of RSS data can be obtained  a  special case of Theorem \ref{th:fi-pros-srs} by setting $S=n$. 
We now compare the FI content of the complete  PROS sample
 with that of RSS of the same size about the unknown parameters $\dt$.
\begin{theorem}\label{le:fi-pros-rss}
Under the conditions of Theorem \ref{th:fi-pros-srs},  the FI matrix of a  
complete \BL{ { \rm PROS($n, S$)}}  sample may be decomposed   as
\[
\I_{pros}(\dt)=\I_{rss}(\dt)+ \mathbb{H}(\dt),
\] 
where $\I_{rss}(\dt)$ is the FI matrix of an   RSS   of size $n$ (\Bc{when the set size is $n$}), and 
\[
\mathbb{H}(\dt)=n(S-n) 
\E\left\{
\frac{[D_{\dt}F(X;\dt)][D_{\dt}F(X;\dt)]^{\top}}{F(X;\dt) {\bar F}(X;\dt)} 
\right\},
\] 
is a non-negative definite matrix.
 \end{theorem}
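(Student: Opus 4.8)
The plan is to obtain this decomposition as an immediate consequence of Theorem \ref{th:fi-pros-srs}, rather than re-deriving the Fisher information from scratch. The crucial observation---already recorded in the remark following Theorem \ref{th:fi-pros-srs}, and noted even earlier in Section \ref{se:pros-sam}---is that an RSS of size $n$ (with set size $n$) is exactly the PROS($n,S$) design in the special case $S=n$, since then each subset $d_r$ contains $m=S/n=1$ unit and no within-subset randomization remains. This lets me invoke Theorem \ref{th:fi-pros-srs} twice, once for the general design and once at $S=n$, and then cancel the common SRS term.

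Concretely, I would first isolate the common kernel matrix $\mathbb{M}(\dt)=\E\{[D_{\dt}F(X;\dt)][D_{\dt}F(X;\dt)]^{\top}/[F(X;\dt){\bar F}(X;\dt)]\}$, which depends on the underlying $f(\cdot;\dt)$ but not on $S$, because the expectation in its definition is taken with respect to $X\sim f(\cdot;\dt)$. Theorem \ref{th:fi-pros-srs} then reads $\I_{pros}(\dt)=\I_{srs}(\dt)+n(S-1)\mathbb{M}(\dt)$ for the general PROS($n,S$) design. Specializing the same theorem to $S=n$ and using the identification of RSS with PROS($n,n$) gives $\I_{rss}(\dt)=\I_{srs}(\dt)+n(n-1)\mathbb{M}(\dt)$. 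Subtracting these two identities eliminates the shared $\I_{srs}(\dt)$ and yields $\I_{pros}(\dt)-\I_{rss}(\dt)=n[(S-1)-(n-1)]\mathbb{M}(\dt)=n(S-n)\mathbb{M}(\dt)$, which is precisely $\mathbb{H}(\dt)$ as claimed.

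It remains to check non-negative definiteness of $\mathbb{H}(\dt)$. For any fixed $x$ in the support, the integrand $[D_{\dt}F(x;\dt)][D_{\dt}F(x;\dt)]^{\top}$ is an outer product and hence non-negative definite, and the scalar weight $1/[F(x;\dt){\bar F}(x;\dt)]$ is positive; taking expectations preserves non-negative definiteness, so $\mathbb{M}(\dt)\ge 0$. Since the design forces $m=S/n\ge 1$, i.e. $S\ge n$, the scalar factor $n(S-n)$ is non-negative, whence $\mathbb{H}(\dt)=n(S-n)\mathbb{M}(\dt)\ge 0$. I do not anticipate a genuine obstacle here, as the entire argument is algebraic once Theorem \ref{th:fi-pros-srs} is in hand. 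The only point deserving care is to confirm that the kernel $\mathbb{M}(\dt)$ is genuinely the same matrix in both applications of the theorem---this holds because $\mathbb{M}(\dt)$ is free of $S$---so that the subtraction collapses cleanly to the single scalar coefficient $n(S-n)$.
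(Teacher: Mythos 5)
Your proposal is correct and follows exactly the route the paper takes: specialize Theorem \ref{th:fi-pros-srs} to $S=n$ to identify $\I_{rss}(\dt)=\I_{srs}(\dt)+n(n-1)\mathbb{M}(\dt)$, then subtract the common SRS term to obtain the coefficient $n(S-n)$. Your additional verification that $\mathbb{M}(\dt)$ is an expectation of positively weighted outer products and that $S\ge n$ makes the non-negative definiteness claim explicit, which the paper leaves implicit.
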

\begin{proof} 
\BL{  Using Theorem \ref{th:fi-pros-srs} for $S=n$, we have
\[\I_{rss}(\dt)=\I_{srs}(\dt)+ n(n-1)  
\E\left\{
\frac{[D_{\dt}F(X;\dt)][D_{\dt}F(X;\dt)]^{\top}}{F(X;\dt) {\bar F}(X;\dt)}
\right\},
\]
where  $\I_{srs}(\dt)$ denotes 
the FI matrix of  a SRS of size $n$. Now, the result follows from the   above equation and the expression for  $\I_{pros}(\dt)$ in Theorem \ref{th:fi-pros-srs}.}
\end{proof}

\noindent Theorem \ref{le:fi-pros-rss} shows the superiority of a complete  PROS sample over an  RSS of the same size in terms of the FI content about the unknown vector of  parameters $\dt$. \Bb{ In comparing the Fisher information content of RSS data to that of SRS data, \cite{Bara-Elsh-2001} considered the example of point estimation within a location-scale family and the example of linear regression. We use the same two examples} to  obtain the FI content of \Ba {a complete PROS data set from the  location-scale family}  of distributions as well as a simple linear regression model and compare them  with those based on SRS and RSS data of the same size. To this end, let 
\[
RE_1(\dt)=\frac{det\{\I_{pros}(\dt)\}}{det\{\I_{srs}(\dt)\}}
\quad\text{and}\quad
RE_2(\dt)=\frac{det\{\I_{pros}(\dt)\}}{det\{\I_{rss}(\dt)\}}.
\]
From Theorems \ref{th:fi-pros-srs} and  \ref{le:fi-pros-rss} one can notice that   the set size $(S)$ and the number of the subsets $(n)$ 
are   two important parameters  of  \BL{  PROS($n, S)$} design that  influence the  FI  content of  PROS samples.  
 We  observe that increasing $S$ and $n$ 
 results in a considerable gain  in $RE_1$ and $RE_2$, respectively. Also, both $RE_1$ and $RE_2$ increase with the number of the parameters  of the  model.  \BL{   Later in this section we  investigate  the case where the set sizes are fixed in both PROS and RSS designs and consider the effect of  the number of subsets in PROS sampling design on the FI content of  PROS data compared with their RSS counterparts. }

\begin{example}\label{le:fi-lsf} (\textbf{Location-Scale family of distributions)}.
Under the assumptions of Theorem \ref{th:fi-pros-srs}, if $f(x; \dt)$ is a member of the location-scale family of distributions with pdf 
\[
f(x; \dt)=\frac{1}{\sigma}g(\frac{x-\mu}{\sigma}), \quad \dt=(\mu, \sigma)\in \mathbb{R}\times \mathbb{R}^{+}, 
\] 
where $g(\cdot)$ is a pdf with \Ba {corresponding} cdf $G(\cdot)$,   then
\begin{align*}\label{fi-srs-lsf}
\I_{pros}(\dt)=&  \I_{srs}(\dt) + \mathbb{K}(\dt) \\
 =& \frac{n}{\sigma^2}
\left(
       \begin{array}{cc}
       E\{\frac{{g^{'}(Z)}^2}{{g(Z)}^2}\} &  E\{\frac{Z{g^{'}(Z)}^2}{{g(Z)}^2}\} \\
       E\{\frac{Z{g^{'}(Z)}^2}{{g(Z)}^2}\} & E\{\frac{Z^2 {g^{'}(Z)}^2}{{g(Z)}^2}-1\}
       \end{array} 
\right) 
+ \frac{n(S-1)}{\sigma^2}
\left(
       \begin{array}{cc}
       E\{\frac{{g(Z)}^2}{G(Z)[1-G(Z)]}\} &  E\{\frac{Z{g(Z)}^2}{G(Z)[1-G(Z)]}\} \\
       E\{\frac{Z{g(Z)}^2}{G(Z)[1-G(Z)]}\} & E\{\frac{Z^2 {g(Z)}^2}{G(Z)[1-G(Z)]}\}
       \end{array} 
\right).
\end{align*}
If $f(x; \dt)$ is symmetric about the location parameter $\mu$, the FI matrix reduces to 
\begin{align*}
\I_{pros}(\dt)
 =& \frac{n}{\sigma^2}
\left(
       \begin{array}{cc}
       E\{\frac{{g^{'}(Z)}^2}{{g(Z)}^2}\} &  ~~~~~~~~~0 \\
       0 & E\{\frac{Z^2 {g^{'}(Z)}^2}{{g(Z)}^2}-1\}
   \end{array} 
\right) 
+ \frac{n(S-1)}{\sigma^2}
\left(
       \begin{array}{cc}
       E\{\frac{{g(Z)}^2}{G(Z)[1-G(Z)]}\} &  ~~~~~~~~~0 \\
       0 & E\{\frac{Z^2 {g(Z)}^2}{G(Z)[1-G(Z)]}\}
       \end{array} 
\right). 
\end{align*}
\noindent \Bb{Similar to \cite{Bara-Elsh-2001} who compared the relative efficiency of RSS to SRS for some members of the location-scale family of distributions,} Tables \ref{ta:re-pros} shows
the values of $RE_1$ and $RE_2$  \Bb{ under the same distributions}.
As expected, the largest values of $RE_1$ and $RE_2$ are achieved in the cases where both location and scale parameters are considered to be unknown. 
\end{example}
%
%

{\begin{table}[h!]
\caption{{\footnotesize{The values of $RE_i(\dt)$,  $i=1, 2$ for comparing the FI content of the complete \BL{  PROS($n, S)$} sample with its   SRS and RSS  of the same size for some distributions. }}}
\vspace{0.3cm} 
\centering 
\footnotesize{\begin{tabular}{llllll}\hline\hline 
Distributions & Location    & Scale       & Shape       & $RE_1$     & $RE_2$                  \\\hline\hline 
Exponential   & 0           & $\sigma$    & -           & 1+0.4041$(S-1)$  &  1+0.4041$\{\frac{(S-n)}{1+0.4041(n-1)}\}$             \\[2ex]
Normal        & $\mu$       & 1           & -           & 1+0.4805$(S-1)$    & 1+0.4805$\{\frac{(S-n)}{1+0.4805(n-1)}\}$           \\
              & 0           & $\sigma$    & -           & 1+0.1350$(S-1)$     &  1+0.1350$\{\frac{(S-n)}{1+0.1350(n-1)}\}$          \\
              & $\mu$       & $\sigma$    & -           & 1+0.6155$(S-1)$ +0.0649$(S-1)^2$    &    1+$(\frac{0.6155(S-n)+0.0649[(S-1)^2-(n-1)^2]}
                                                             {1+0.6155(n-1)+0.0649(n-1)^2})$       \\[2ex]
Logistic      & $\mu$       & 1           & -           & 1+0.0050$(S-1)$      & 1+0.1666$\{\frac{(S-n)}{0.3332+0.1666(n-1)}\}$          \\
              & 0           & $\sigma$    & -           & 1+0.1513$(S-1)$     & 1+0.2149$\{\frac{(S-n)}{1.4189+0.2149(n-1)}\}$           \\
              & $\mu$       & $\sigma$    & -           & 1+0.6516$(S-1)$
                                                           +0.0757$(S-1)^2$       & 1+$(\frac{0.3081(S-n)+0.0358[(S-1)^2-(n-1)^2]}
                                                             {0.4728+0.3081(n-1)+0.0358(n-1)^2})$                 \\[2ex]
Extreme-value & $\mu$       & 1           & -           & 1+0.4041$(S-1)$       & 1+0.4041$\{\frac{(S-n)}{1+0.4041(n-1)}\}$         \\
              & 0           & $\sigma$    & -           & 1+0.2519$(S-1)$      & 1+0.2518$\{\frac{(S-n)}{1+0.2518(n-1)}\}$             \\
              & $\mu$       & $\sigma$    & -           & 1+0.6012$(S-1)$
                                                           +0.0686$(S-1)^2$    & 1+$(\frac{0.6560(S-n)+0.1017[(S-1)^2-(n-1)^2]}
                                                             {1+0.6560(n-1)+0.1017(n-1)^2})$            \\[2ex]
Gamma         & 0           & $\sigma$    & 2           & 1+0.4393$(S-1)$    & 1+0.7296$\{\frac{(S-n)}{1.6609+0.7296(n-1)}\}$           \\  
              & 0           & $\sigma$    & 3           & 1+0.4523$(S-1)$     & 1+1.1690$\{\frac{(S-n)}{2.5846+1.1690(n-1)}\}$                        \\  
              & 0           & $\sigma$    & 4           & 1+0.4591$(S-1)$    & 1+1.6161$\{\frac{(S-n)}{3.5200+1.6161(n-1)}\}$              \\  
              & 0           & $\sigma$    & 10           & 1+0.4718$(S-1)$    & 1+4.2396$\{\frac{(S-n)}{8.9820+4.2396(n-1)}\}$          \\\hline    \hline                                                    
\end{tabular}}
\label{ta:re-pros}
\end{table}
}

\begin{example} (\textbf{Linear Regression Model}).
In this example, \BL{ {\rm PROS($n, S$)}} sampling design is applied to the simple regression model $Y_i= \beta_0+ \beta_1 x_i+ \epsilon_i$  \Bc{with replicated observations of the response variable}
where for each  \Bc{value } $x_i$ of \Bc{independent variable}, $i=1,\ldots, k$, we have a 
PROS sample  of $Y$'s denoted by $\Bc{(Y_{i(d_1)},\ldots,Y_{i(d_n)})}$.
\Bc{For more details about the use of RSS sampling in this regression model, see \cite{barreto1999best} and \cite{Bara-Elsh-2001}}.
 Suppose $\epsilon_i$ are independent and identically distributed  random variables from a symmetric distribution with pdf $f(\cdot)$ and  cdf $F(\cdot)$, respectively.  Let   $E(\epsilon_i)=0$ and $Var(\epsilon_i)=\sigma^2$.
 Without loss of generality, 
we take  ${\bar x}=\frac{1}{k}\sum_{i=1}^{k}x_i=0$, $s_x^2=\frac{1}{k}\sum_{i=1}^{k}x_i^2$ and let $\dt=(\beta_0, \beta_1, \sigma)$.
Using Example \ref{le:fi-lsf}, it is easy to show that 
%
\begin{eqnarray*}
\I_{srs}(\dt)&=&\sum_{i=1}^{k} \frac{n}{\sigma^2} 
\left(
\begin{array}{ccc}
E\{\frac{f^{'} (Z)^2}{f(Z)^2}\}      & x_i E\{\frac{f^{'}(Z)^2}{f(Z)^2}\} & 0\\
x_i E\{\frac{f^{'}(Z)^2}{f(Z)^2}\}  & x_i^2 E\{\frac{f^{'}(Z)^2}{f(Z)^2}\} & 0\\
0 & 0 & E\{\frac{Z^2 f^{'}(Z)^2}{f(Z)^2}\}-1
\end{array}
\right)\\
&=& \frac{nk}{\sigma^2}diag
\left(
E\{\frac{f^{'}(Z)^2}{f(Z)^2}\},s_x^2 E\{\frac{f^{'}(Z)^2}{f(Z)^2}\},E\{\frac{Z^2 f^{'}(Z)^2}{f(Z)^2}\}-1
\right),
\end{eqnarray*}
and
\begin{eqnarray*}
\mathbb{K}(\dt)&=& \sum_{i=1}^{k} \frac{2n(S-1)}{\sigma^2} 
\left(
\begin{array}{ccc}
E\{\frac{f(Z)^2}{F(Z)}\}      & x_i E\{\frac{f(Z)^2}{F(Z)}\} & 0\\
x_i E\{\frac{f(Z)^2}{F(Z)}\}  & x_i^2 E\{\frac{f(Z)^2}{F(Z)}\} & 0\\
0 & 0 & E\{\frac{Z^2 f^{'}(Z)^2}{F(Z)}\}
\end{array}
\right) \\
&=& \frac{2kn(S-1)}{\sigma^2} diag
\left(
E\{\frac{f(Z)^2}{F(Z)}\}, s_x^2 E\{\frac{f(Z)^2}{F(Z)}\}, E\{\frac{Z^2 f^{'}(Z)^2}{F(Z)}\}
\right).
\end{eqnarray*}
Note that $RE_1(\dt)$ is  independent of 
  $x_i$ and  $\dt$ and it only depends on the pdf $f(\cdot)$ and \Bc{the corresponding cdf} $F(\cdot)$.
 As \Ba{a special case}, when  $\epsilon_i$s are normally distributed,  one can easily show that   \[
RE_1(\dt)=\left\{1+0.4805(S-1)\right\}^2\left\{1+0.1350(S-1)\right\}.
\]
When $S=n$,  we obtain the result of   
 \cite{Bara-Elsh-2001} for  RSS data as \Ba{a special case} of our results.  
\end{example}

\subsection{FI matrix of ${\bf X}_{pros}$ and the effect of misplacement errors  }\label{sub:error}
In this section we obtain the FI matrix of ${\bf X}_{pros}$.
\BL{  We study a setting when it is  assumed  that the subsetting process of PROS($n, S$) design   could be subjected to misplacement  errors between the subset groups.}
   For example, when the actual rank of a unit is in the judgment subset
$d_r$, due to judgment 
ranking error it could be misplaced into another judgment subset, say
$d_s$, $r\neq s$,  which leads to a  different kind of ranking error than the one usually \Ba{encountered}  in ranked set sampling. 
\BL{  Note that the FI matrix of  ${\bf X}_{pros}$ under perfect  subsetting  assumption can also be obtained as a special case of the   imperfect subsetting scenario.} 
We use the missing data model proposed  by \cite{Arsl} to model possible   misplacement errors in PROS sampling design. Let ${\bf X}_{pros}=\{ X_{[d_r]}, r=1, \ldots, n\}$ denote an imperfect PROS sample where $[\cdot]$ is used to show the presence  of misplacement errors in PROS  subsetting process. When the subsetting process is perfect we simply use $X_{(d_r)}$  to show PROS observations. 
Let  $\balpha$ denote the misplacement probability matrix,
\[
{\balpha}=
\left[
       \begin{array}{cccc}
       \alpha_{d_1,d_1} & \alpha_{d_1,d_2} & \ldots & \alpha_{d_1,d_n} \\
       \alpha_{d_2,d_1} & \alpha_{d_2,d_2} & \ldots & \alpha_{d_2,d_n} \\
       \vdots           & \vdots           & \Bc{\ddots} & \vdots           \\
       \alpha_{d_n,d_1} & \alpha_{d_n,d_2} & \ldots & \alpha_{d_n,d_n} \\
       \end{array}
\right]_{\BL{ {n\times n}}},
\]
where $\alpha_{d_r,d_h}$ is the misplacement probability of a unit from subset $d_h$ into subset $d_r$.
Since the design parameter $D$ creates a partition over the  sets, the matrix $\balpha$
should be a double stochastic matrix such that  $\sum_{r=1}^{n} \alpha_{d_r,d_h}=\sum_{h=1}^{n} \alpha_{d_r,d_h}=1$.
Suppose  $f_{[d_r]}(\cdot;\dt)$ is  the pdf  of $X_{[d_r]}$, $r=1, \ldots, n$. One can easily show that  
\begin{eqnarray}\label{f-imp} 
f_{[d_r]}(x_{[d_r]};\dt) = 
\sum_{h=1}^{n}
\alpha_{d_r,d_h} f_{(d_h)}(x_{[d_r]};\dt) 
=  
f(x_{[d_r]};\dt) g_r(x_{[d_r]};\dt),
\end{eqnarray}
where 
\begin{align}\label{eq:gr}
g_r(x;\dt)= n \sum_{h=1}^{n} \sum_{u\in d_h}^{} \alpha_{d_r,d_h}
{{S-1}\choose{u-1}} [F(x;\dt)]^{u-1} [1-F(x;\dt)]^{S-u}.
\end{align}
%
The likelihood function under an  imperfect \BL{  PROS($n, S)$} design     is now  given by
\begin{eqnarray*}\label{l-imp}
L(\Omega)
= \prod_{r=1}^{n} f_{[d_r]}(x_{[d_r]};\dt)
= \prod_{r=1}^{n} f(x_{[d_r]};\dt) g_r(x_{[d_r]};\dt),
\end{eqnarray*}
where $\Omega= (\dt, \balpha)$. 
To obtain the FI matrix of an imperfect  PROS sample and compare it with its SRS and RSS counterparts
we need the following result,  the proof of which is left to the reader.
\begin{lemma} \label{le:f-imp}
Let $Y_r=X_{[d_r]}$,   $r=1,\ldots, n$, be observed from a continuous  distribution with pdf  $f(\cdot; \dt)$ using  an imperfect \BL{ {\rm PROS($n, S$)}} sampling design.  Suppose  $f_{[d_r]}(\cdot;\dt)$ and $g_r(\cdot,\dt)$ are  defined 
as in \eqref{f-imp} and \eqref{eq:gr}, respectively. Under the regularity conditions \BL{  of {\rm \cite{chen-bai-sinha}}}, we have

\begin{itemize}

\item [(i)] $\sum_{r=1}^{n} f_{[d_r]}(x;\dt)= nf(x;\dt)$,
\item [(ii)] $\sum_{r=1}^{n} g_r(x;\dt)=n$,
\item [(iii)] 
 $\sum_{r=1}^{n}E
\left\{
\frac{D^2_{\dt}g_r(Y_r;\dt)}{g_r(Y_r;\dt)}
\right\}=0,
$
\item [(iv)]
$
\sum_{r=1}^{n} E
\left\{
\frac{[D_{\dt} g_r(Y_r;\dt)][D_{\dt} g_r(Y_r;\dt)]^{\top}}{g_r^2(Y_r;\dt)}
\right\}
= \BL{  \sum_{r=1}^{n}}
E
\left\{
\frac{[D_{\dt} g_r(X;\dt)][D_{\dt} g_r(X;\dt)]^{\top}}{g_r(X;\dt)}
\right\}.
$
\end {itemize}
\end{lemma}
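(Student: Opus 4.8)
The plan is to establish the four identities in sequence, relying throughout on two structural facts: the double stochasticity of $\balpha$ (so that $\sum_{r=1}^n\alpha_{d_r,d_h}=1$ for each $h$) and the factorization $f_{[d_r]}(x;\dt)=f(x;\dt)\,g_r(x;\dt)$ recorded in \eqref{f-imp}. For part (i) I would substitute the definition $f_{[d_r]}=\sum_{h=1}^n\alpha_{d_r,d_h}f_{(d_h)}$, interchange the finite sums over $r$ and $h$, collapse the inner sum using the column-sum condition $\sum_{r}\alpha_{d_r,d_h}=1$, and then invoke \eqref{f-x} to conclude that $\sum_{h=1}^n f_{(d_h)}(x;\dt)=n f(x;\dt)$. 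Part (ii) is then immediate by dividing the identity in (i) by $f(x;\dt)$ and using $g_r=f_{[d_r]}/f$; alternatively one may insert the explicit form \eqref{eq:gr}, swap sums, apply $\sum_r\alpha_{d_r,d_h}=1$, and recognize $\sum_{u=1}^{S}\binom{S-1}{u-1}[F]^{u-1}[1-F]^{S-u}=[F+(1-F)]^{S-1}=1$.

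For part (iii), the crucial observation is that $Y_r$ carries density $f_{[d_r]}=f\,g_r$, so the factor $g_r$ in the denominator of the integrand cancels the $g_r$ in the density and the expectation reduces to $\int D^2_{\dt}g_r(x;\dt)\,f(x;\dt)\,dx$. Summing over $r$ and passing the derivative through the finite sum turns the integrand into $D^2_{\dt}\big(\sum_{r}g_r\big)$, which vanishes because part (ii) gives $\sum_r g_r\equiv n$, free of $\dt$. Part (iv) follows from the same cancellation: under $f_{[d_r]}$ the quotient satisfies $f_{[d_r]}/g_r^2=f/g_r$, so each left-hand expectation (taken with respect to $f_{[d_r]}$) equals $\int [D_{\dt}g_r][D_{\dt}g_r]^{\top} g_r^{-1}\, f\,dx$, which is precisely the corresponding right-hand term (taken with respect to $f$); here the two sides in fact agree summand-by-summand, not merely after summation over $r$.

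The routine parts are (i) and (ii), which are purely algebraic consequences of double stochasticity and the binomial identity. The delicate points, and the only places where care is genuinely required, are in (iii) and (iv): one must justify the interchange of differentiation with the integral and with the finite sum under the regularity conditions of \cite{chen-bai-sinha}, and one must keep careful track of the fact that the expectation on each line is taken with respect to a different density---$f_{[d_r]}$ on the side involving $Y_r$ and $f$ on the side involving $X$. Once this change-of-measure bookkeeping is in place, all four identities collapse to one-line computations, which is why the authors can reasonably leave the proof to the reader.
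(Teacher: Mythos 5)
Your proof is correct, and since the paper explicitly leaves this proof to the reader, your argument supplies exactly the intended one: double stochasticity of $\balpha$ plus \eqref{f-x} gives (i) and (ii), and the factorization $f_{[d_r]}=f\,g_r$ from \eqref{f-imp} performs the change of measure that reduces (iii) to $\int D^2_{\dt}\bigl(\sum_r g_r\bigr)f\,dx=0$ and makes (iv) hold term by term. Your bookkeeping of which density each expectation is taken under ($f_{[d_r]}$ for $Y_r$, $f$ for $X$) is precisely the point the lemma is recording for use in Theorem \ref{th:fi-imp}.
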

\noindent Now,  we  show that the FI content of   ${\bf X}_{pros}$ is more that its SRS counterpart.  Unfortunately, it is hard to obtain analytical results  to compare the FI content of PROS and RSS data, therefore,  we   should rely on \Bc{numerical studies} for this case (see Tables \ref{ta:re-imp-6} and \ref{ta:re-imp-12}). 
\begin{theorem}\label{th:fi-imp}
Under the conditions of Lemma \ref{le:f-imp}, the FI matrix of an  imperfect \BL{ {\rm PROS($n, S$)}} sample about 
unknown parameters $\Omega=(\balpha,\dt)$  is given by  
\begin{eqnarray*}\label{th:fis-imp}
\I_{ipros}(\Omega)
&=&\I_{srs}(\dt)+\sum_{r=1}^{n} 
E
\left\{
\frac{[D_{\dt} g_r(X;\dt)][D_{\dt} g_r(X;\dt)]^{\top}}{g_r(X;\dt)}
\right\}\\
&=& \I_{srs}(\dt)+\sum_{r=1}^{n} \tilde\Delta_r,
\end{eqnarray*}
where $\sum_{r=1}^{n} \tilde\Delta_r$ is a \BL{  non-negative} definite matrix. 
\end{theorem}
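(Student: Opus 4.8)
The plan is to start from the imperfect--PROS log-likelihood and to reduce the whole Fisher information computation to the four identities collected in Lemma \ref{le:f-imp}. Writing $Y_r=X_{[d_r]}$ and using the factorization $f_{[d_r]}(x;\dt)=f(x;\dt)\,g_r(x;\dt)$ from \eqref{f-imp}, the log-likelihood separates additively as
\[
l(\Omega)=\sum_{r=1}^{n}\log f(Y_r;\dt)+\sum_{r=1}^{n}\log g_r(Y_r;\dt).
\]
Differentiating twice with respect to $\dt$ and taking the negative expectation under the true sampling law (so that the $r$-th term is averaged against its own marginal $f_{[d_r]}$, not against $f$) splits $\I_{ipros}$ into a ``baseline'' contribution coming from the $\log f$ terms and a ``correction'' contribution coming from the $\log g_r$ terms. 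The entire argument then amounts to showing that the first piece equals $\I_{srs}(\dt)$ and the second equals $\sum_{r=1}^{n}\tilde\Delta_r$.

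For the baseline piece I would evaluate $-\sum_{r=1}^{n}\E[D_{\dt}^2\log f(Y_r;\dt)]$, where the crucial point is that the $r$-th expectation is taken against $f_{[d_r]}$. Writing the sum of these expectations as a single integral against $\sum_{r=1}^{n}f_{[d_r]}(x;\dt)$ and invoking Lemma \ref{le:f-imp}(i), which gives $\sum_{r=1}^{n}f_{[d_r]}=nf$, this collapses to $-n\,\E[D_{\dt}^2\log f(X;\dt)]=\I_{srs}(\dt)$, the expectation now being against the parent density $f$. This is precisely the step where the double-stochasticity of $\balpha$, encoded in Lemma \ref{le:f-imp}(i), does its work.

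For the correction piece I would use the expansion $D_{\dt}^2\log g_r=D_{\dt}^2 g_r/g_r-[D_{\dt}g_r][D_{\dt}g_r]^{\top}/g_r^2$, giving
\begin{align*}
-\sum_{r=1}^{n}\E\left[D_{\dt}^2\log g_r(Y_r;\dt)\right]
&= -\sum_{r=1}^{n}\E\left[\frac{D_{\dt}^2 g_r(Y_r;\dt)}{g_r(Y_r;\dt)}\right] \\
&\quad + \sum_{r=1}^{n}\E\left[\frac{[D_{\dt}g_r(Y_r;\dt)][D_{\dt}g_r(Y_r;\dt)]^{\top}}{g_r^2(Y_r;\dt)}\right].
\end{align*}
Lemma \ref{le:f-imp}(iii) annihilates the first sum, and Lemma \ref{le:f-imp}(iv) rewrites the second sum as $\sum_{r=1}^{n}\E\{[D_{\dt}g_r(X;\dt)][D_{\dt}g_r(X;\dt)]^{\top}/g_r(X;\dt)\}=\sum_{r=1}^{n}\tilde\Delta_r$, converting a $g_r^2$-weighted average under $f_{[d_r]}$ into a $g_r$-weighted average under $f$. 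Non-negative definiteness of $\sum_{r=1}^{n}\tilde\Delta_r$ is then immediate: each summand has the form $\E\{vv^{\top}/g_r\}$ with $v=D_{\dt}g_r(X;\dt)$, and since $g_r>0$ a.e. by \eqref{eq:gr}, each $\tilde\Delta_r$ is non-negative definite, whence so is their sum.

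The step I expect to require the most care is the bookkeeping of the sampling distribution. Every expectation entering $-\E[D_{\dt}^2 l(\Omega)]$ is taken under the genuine imperfect--PROS law, i.e.\ $Y_r\sim f_{[d_r]}$, and the clean final form emerges only after summing over $r$: no individual term simplifies on its own, because the identities (i), (iii) and (iv) of Lemma \ref{le:f-imp} are exactly the Bartlett-type relations that make the $r$-sums telescope into expectations under the parent density $f$. I would therefore be careful never to replace $f_{[d_r]}$ by $f$ prematurely, applying the lemma only to the fully assembled sums.
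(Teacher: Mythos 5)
Your proof is correct and follows essentially the route the paper intends: the authors omit the argument, saying only that it parallels Theorem~\ref{th:fi-pros-srs}, but the decomposition $\log f_{[d_r]}=\log f+\log g_r$ combined with parts (i), (iii) and (iv) of Lemma~\ref{le:f-imp} is exactly the machinery they set up for this purpose, and your final non-negative-definiteness observation via $\E\{vv^{\top}/g_r\}$ with $g_r>0$ is the standard completion. Your emphasis on keeping each expectation under $f_{[d_r]}$ until the sums over $r$ are fully assembled is precisely the point of those identities, so nothing is missing.
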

\begin{proof} The proof is similar to the proof of Theorem \ref{th:fi-pros-srs} and hence it is omitted. 
\end{proof}
 \noindent  To  study the effect of misplacement errors  in the subsetting process of  \BL{  PROS($n, S)$}  design   on the information content of the sample, \Bb{following 
 \cite{Bara-Elsh-2001},} we   consider 
 the following misplacement probabilities matrices  when  $n=2$ and $n=3$, 
\[
{\balpha}_1=
\left[
       \begin{array}{cc}
       p & 1-p \\
       1-p & p \\
       \end{array}
\right]
\quad \text{and}\quad
{\balpha}_2=
\left[
       \begin{array}{ccc}
       p & \frac{1-p}{2} & \frac{1-p}{2} \\
       \frac{1-p}{2} & p & \frac{1-p}{2} \\
       \frac{1-p}{2} & \frac{1-p}{2} & p \\
       \end{array}
\right].
\]
For some members of the location-scale family of distributions,   numerical values of $RE_1(\dt)$ and $RE_2(\dt)$ are calculated to compare the FI content of imperfect    PROS samples  with their    SRS and RSS counterparts of the same size when  $S=6$ and $S=12$. These values  are reported in Tables \ref{ta:re-imp-6} and \ref{ta:re-imp-12},  respectively. 
\Bc{ The results are calculated through a Monte Carlo simulation study comprising of 50,000 replications.} 
Both tables show that  misplacement   errors in  the subsetting process of   PROS 
sampling  have considerable effect on the information content of   PROS data about the unknown parameters of the model.
Note that,  when the subsetting  process is done randomly, i.e.,  
 $p=1/2$ when $n=2$ and $p=1/3$ in the case $n=3$,  the FI content of PROS samples  is the same as the FI content of SRS and RSS data of the same size. 
\Bb{ Similar results for comparing the FI content of imperfect  RSS   and    SRS samples can be found in \cite{Bara-Elsh-2001}.}

\BL{ 
Now, we   investigate  the effect of PROS sampling parameters $S$ and $n$ on the FI content of PROS samples  compared with their RSS counterparts.  To this end, we first calculate the FI content of two   ranked set samples  with fixed set sizes  6 and 12  when the cycle size is 1, under both perfect and different imperfect ranking scenarios. The FI content of  RSS samples are then compared with that of  PROS samples under different values of $S, n$ and $N$, where $N$ is the number of cycles   in order to match the  number of PROS  observations with their corresponding RSS samples.  Under some members of the location-scale family of distributions, Tables \ref{ta:imp-pros-rss-6} and \ref{ta:imp-pros-rss-12} provide the values of $RE_2(\dt)$  for the sample sizes 6 and 12, respectively, where  the  subsetting and ranking error \Ba{probability} matrices are defined   following the same  structure used in   $\balpha_1$ and $\balpha_2$ with proper adjustments  to the off-diagonal elements for  the set size. 
\Bc{For example,  consider the case where $S=6, n=3, l=2$ in Table \ref{ta:imp-pros-rss-6}. In this case, RSS design with set size $S=6$ is  compared with  the PROS design with set size $S=6$,  each consisting of three subsets  $n=3$ of equal sizes $m=2$. Since the PROS design results in 3 observations (as opposed to RSS  that results in  6 observations), PROS sampling is replicated with two cycles $l=2$. The relative efficiency values are simulated through  a Monte Carlo study with 50,000 replications.}
From Tables \ref{ta:imp-pros-rss-6} and \ref{ta:imp-pros-rss-12}, it is at once apparent that sampling parameters $S$ and $n$ as well as ranking (subsetting) error  models play key roles on the information  content of PROS data about unknown parameters of the model. As noted earlier, one observes that the performance of PROS$(n,S)$ and RSS coincides when $S=n$. We also note that for  fixed set size  $S$  (in both RSS and PROS design)  and under moderately accurate ranking in RSS design,  some PROS samples carry less information than  RSS of the same size about the  parameter of  the underlying population. However, the difference between the information content of PROS and RSS data \Bc{diminishes as $n$ increases to $S$}.  One may also observe  more informative PROS samples than RSS data of the same size (even with a larger set size than that of PROS design)  when the ranking error in RSS design is large. 
 %
}

\subsection{FI using the Dell and Clutter model for misplacement ranking errors}
\Bc{Here, we propose  two-stage Monte Carlo simulations to study the effect of misplacement ranking error models on the FI content of PROS samples. Following the model proposed in  \cite{dell1972ranked}, in the first stage we compute the misplacement probabilities of PROS and RSS designs. In the second stage, these misplacement probabilities are used to compute the  FI content of PROS and RSS sampling designs.
}
\Bc{ Using  the Dell and Clutter  model for $\rho=1,0.9,0.75,0.5,0.25$ (representing different degrees of association between the ranking covariate and the response variable), the first stage  computes the misplacement probabilities matrices (${\bf\alpha}_i=1,\ldots,5$) for each $\rho$ through simulations of size 5000. 
  Using the estimated misplacement probabilities, in the second stage, we compute the FI content of the PROS, RSS and SRS sampling designs through Monte Carlo simulations comprising of 50,000 replicates. The results of the simulation studies for different family of distributions ( like previous simulation studies) are reported in Tables \ref{ta:re-imp-b-dc} and \ref{ta:imp-dlfixtab}. To explore the effect ranking errors on different distributions, we also computed the FI content of PROS samples under four different  mixture of two univariate exponential distributions
$
f(x;{\bf\Psi})=\pi \alpha e^{-\alpha x}+(1-\pi) \beta e^{-\beta x}, \quad x>0, 
$
where $\pi \in(0, 1)$,  $\alpha, \beta>0$ and ${\bf \Psi}= (\pi, \alpha, \beta)$.
To handle the mixture of exponential distributions, following \cite{hill1963information}, we calculated the numerical values of the relative efficiencies.
To do so, a new parameter  $h=\frac{\alpha}{\beta}$ is introduced and the exponential mixture model with three parameters $(\pi, \alpha, \beta)$ is transformed to a  mixture density  with two parameters $(\pi, h)$. }


\small{\begin{table}[h!]
\caption{\Bc{{Values of $RE_1$ and $RE_2$ to compare the FI content of imperfect PROS data with its SRS and RSS counterparts  of the same size for some  distributions when $S=6$.}}}
\vspace{0.3cm} 
\centering 
{
\Bc{\begin{tabular}{lllccccccccccc}\hline
&         &&\multicolumn{11}{c}{$p$}      \\
\cline{4-14}
Distribution  & $n$   &Design &0    &0.1  &0.2  &0.3  &0.4  &0.5  &0.6  &0.7  &0.8  &0.9  &1      \\\hline
Normal        &2      &$RE_1$ &2.48 &1.67 &1.34 &1.14 &1.03 &1.000 &1.03 &1.14 &1.34 &1.67 &2.48   \\[-1.7ex]
              &       &$RE_2$ &1.47 &1.25 &1.14 &1.06 &1.02 &1.000 &1.02 &1.06 &1.14 &1.25 &1.47     \\
              &3      &$RE_1$ &1.82 &1.28 &1.08 &1.004 &1.02 &1.11 &1.28 &1.54 &1.94 &2.54 &3.78    \\[-1.7ex]
              &       &$RE_2$ &1.28 &1.11 &1.03 &1.002 &1.01 &1.04 &1.11 &1.19 &1.28 &1.38 &1.54       \\\hline
Exponential   &2      &$RE_1$ &1.93 &1.47 &1.24 &1.10 &1.02 &1.000 &1.02 &1.10 &1.24 &1.47 &1.93    \\[-1.7ex]
              &       &$RE_2$ &1.37 &1.20 &1.11 &1.05 &1.01 &1.000 &1.01 &1.05 &1.11 &1.20 &1.37     \\
              &3      &$RE_1$ &1.47 &1.18 &1.05 &1.003 &1.01 &1.07 &1.18 &1.35 &1.58 &1.90 &2.44     \\[-1.7ex]
              &       &$RE_2$ &1.18 &1.08 &1.02 &1.001 &1.01 &1.03 &1.07 &1.13 &1.19 &1.26 &1.36     \\\hline
Logistic      &2      &$RE_1$ &2.73 &1.78 &1.39 &1.16 &1.04 &1.000 &1.04 &1.16 &1.39 &1.78 &2.73    \\[-1.7ex]
              &       &$RE_2$ &1.58 &1.30 &1.17 &1.08 &1.02 &1.000 &1.02 &1.08 &1.17 &1.30 &1.58     \\
              &3      &$RE_1$ &1.88 &1.31 &1.09 &1.005 &1.02 &1.12 &1.31 &1.61 &2.06 &2.74 &4.14     \\[-1.7ex]
              &       &$RE_2$ &1.32 &1.12 &1.04 &1.002 &1.01 &1.05 &1.12 &1.21 &1.31 &1.43 &1.61      \\\hline                                                                
%
\end{tabular}
\label{ta:re-imp-6}}}
\end{table}}

{\begin{table}[h!]
\caption{\Bc{{Values of $RE_1$ and $RE_2$ to compare the FI content of imperfect PROS data with its SRS and RSS counterparts  of the same size for some distributions when $S=12$. }}}
\vspace{0.3cm} 
\centering 
{\Bc{\begin{tabular}{lllccccccccccc}\hline
&         &&\multicolumn{11}{c}{$p$}      \\
\cline{4-14}
Distribution  &$n$   &Design &0    &0.1  &0.2  &0.3  &0.4  &0.5    &0.6  &0.7  &0.8  &0.9  &1      \\\hline
Normal        &2     &$RE_1$ &3.15 &1.96 &1.48 &1.20 &1.05 &1.000 &1.05 &1.20 &1.48 &1.96 &3.15    \\[-1.7ex]
              &      &$RE_2$ &1.87 &1.46 &1.26 &1.12 &1.03 &1.000 &1.03 &1.12 &1.26 &1.46 &1.87     \\
              &3     &$RE_1$ &2.51 &1.49 &1.13 &1.007 &1.03 &1.18 &1.46 &1.90 &2.56 &3.58 &5.74     \\[-1.7ex]
              &      &$RE_2$ &1.77 &1.29 &1.09 &1.005 &1.02 &1.11 &1.26 &1.46 &1.68 &1.93 &2.32     \\\hline
Exponential   &2     &$RE_1$ &2.39 &1.69 &1.35 &1.15 &1.04 &1.000 &1.04 &1.15 &1.35 &1.69 &2.39    \\[-1.7ex]
              &      &$RE_2$ &1.70 &1.38 &1.21 &1.10 &1.02 &1.000 &1.02 &1.10 &1.21 &1.38 &1.70    \\
              &3     &$RE_1$ &1.85 &1.31 &1.09 &1.005 &1.02 &1.12 &1.30 &1.57 &1.93 &2.43 &3.30     \\[-1.7ex]
              &      &$RE_2$ &1.48 &1.19 &1.06 &1.003 &1.01 &1.08 &1.18 &1.31 &1.45 &1.61 &1.82     \\\hline
Logistic      &2     &$RE_1$ &3.56 &2.14 &1.57 &1.24 &1.06 &1.000 &1.06 &1.24 &1.57 &2.14 &3.56     \\[-1.7ex]
              &      &$RE_2$ &2.06 &1.56 &1.32 &1.15 &1.04 &1.000 &1.04 &1.15 &1.32 &1.56 &2.06      \\    
              &3     &$RE_1$ &2.72 &1.55 &1.15 &1.008 &1.03 &1.20 &1.53 &2.04 &2.81 &4.04 &6.65    \\[-1.7ex]
              &      &$RE_2$ &1.90 &1.33 &1.10 &1.005 &1.02 &1.13 &1.30 &1.53 &1.79 &2.09 &2.56     \\\hline
%
%
%
\end{tabular}
\label{ta:re-imp-12}}}
\end{table}
}

\Bc{In the next section, we  study  the uncertainty structure (as another aspect of information content) of PROS
samples in terms of some well-known measures including Shannon entropy, R\'enyi entropy and KL information. Nevertheless, it is worth mentioning that the  FI and uncertainty content play important roles in
different inferential aspects of the PROS sampling designs including, for instance,
maximum likelihood (ML) estimation  and its properties.
The FI matrix is a key concept in the theory of statistical inference particularly in the theory of ML estimation problem \citep{lehmann1998theory}. It is used to derive asymptotic distribution of MLE and to calculate the covariance matrices associated with ML estimates as well as  Bayesian Statistics.}
 

{\begin{table}[h!]
\caption{\Bc{\footnotesize{Values of $RE_1$ and $RE_2$ to compare the FI content of imperfect PROS data with its SRS and RSS counterparts  of the same size for some distributions based on different Dell-Clutter parameters when $S\in\{6,12\}$. }}}
\vspace{0.3cm} 
\centering 
\footnotesize{\Bc{\begin{tabular}{llcccccccccccc}\hline
              &   &       &\multicolumn{5}{c}{S=6,~$\rho$}   &  &\multicolumn{5}{c}{S=12,~$\rho$}   \\
\cline{4-8} \cline{10-14}              
Distribution  &$n$&Design &$0.25$&$0.50$&$0.75$&$0.90$&$1.00$& &$0.25$&$0.50$&$0.75$&$0.90$&$1.00$ \\\hline
Normal        & 2 & $RE_1$& 1.02 & 1.10 & 1.27 & 1.51 & 2.48 & & 1.03 & 1.13 & 1.36 & 1.68 & 3.15 \\[-1.7ex] 
              &   & $RE_2$& 1.00 & 1.02 & 0.98 & 0.96 & 1.47 & & 1.01 & 1.04 & 1.05 & 1.06 & 1.87 \\ 
              & 3 &$RE_1$ & 1.03 & 1.15 & 1.43 & 1.85 & 3.70 & & 1.04 & 1.20 & 1.57 & 2.16 & 5.75 \\[-1.7ex] 
              &   &$RE_2$ & 1.01 & 1.02 & 1.02 & 1.05 & 1.50 & & 1.01 & 1.07 & 1.12 & 1.23 & 2.32 \\\hline 
Exponential   & 2 & $RE_1$& 1.02 & 1.06 & 1.18 & 1.31 & 1.92 & & 1.02 & 1.08 & 1.22 & 1.44 & 2.38 \\ [-1.7ex]
              &   & $RE_2$& 1.00 & 1.00 & 0.99 & 0.96 & 1.37 & & 1.01 & 1.02 & 1.03 & 1.06 & 1.69 \\ 
              & 3 &$RE_1$ & 1.03 & 1.11 & 1.30 & 1.56 & 2.47 & & 1.03 & 1.14 & 1.37 & 1.73 & 3.44  \\ [-1.7ex] 
              &   &$RE_2$ & 1.00 & 1.02 & 1.02 & 1.04 & 1.35 & & 1.01 & 1.05 & 1.07 & 1.16 & 1.89  \\\hline 
Logistic      & 2 & $RE_1$& 1.02 & 1.10 & 1.31 & 1.55 & 2.69 & & 1.04 & 1.16 & 1.40 & 1.78 & 3.54  \\  [-1.7ex]
              &   & $RE_2$& 1.00 & 1.00 & 1.01 & 0.96 & 1.56 & & 1.01 & 1.06 & 1.08 & 1.10 & 2.05  \\ 
              & 3 &$RE_1$ & 1.03 & 1.16 & 1.49 & 1.95 & 4.13 & & 1.04 &  1.21 & 1.64 & 2.28 & 6.76  \\  [-1.7ex]
              &   &$RE_2$ & 1.00 & 1.01 & 1.04 & 1.06 & 1.60 & & 1.01 & 1.06 & 1.15 & 1.24 & 2.62  \\ 
              \hline
$\Psi=(\pi,h)$& 2 & $RE_1$& 1.04 & 1.13 & 1.44 & 1.80 &3.83  & &1.05  & 1.21 & 1.51 & 2.02 &5.11  \\ [-1.7ex]
$(0.3,1/3)$   &   & $RE_2$& 1.01 & 0.98 & 0.95 & 0.90 &2.36  & &1.02  & 1.04 & 1.00 & 1.00 & 3.16 \\ 
              & 3 &$RE_1$ & 1.06 & 1.22 & 1.59 & 2.07 &4.51  & &1.06  & 1.23 & 1.65 & 2.30 & 6.44 \\ [-1.7ex]
              &   &$RE_2$ & 1.01 & 1.02 & 1.00 & 1.00 &1.91  & &1.02  & 1.03 & 1.04 & 1.11 & 2.73\\
              \hline 
$\Psi=(\pi,h)$& 2 & $RE_1$& 1.02 & 1.10 & 1.26 & 1.48 &3.39  & &1.03  & 1.10 & 1.25 & 1.50 & 5.25 \\[-1.7ex] 
$(0.3,1/9)$   &   & $RE_2$& 1.00 & 0.99 & 0.92 & 0.85 &2.10  & &1.00  & 0.98 & 0.91 & 0.86 & 3.25 \\ 
              & 3 &$RE_1$ & 1.03 & 1.14 & 1.41 & 1.72 &4.45  & &1.04  & 1.15 & 1.42 & 1.71 & 7.59 \\ [-1.7ex]
              &   &$RE_2$ & 1.00 & 0.99 & 0.97 & 0.92 &1.97  & &1.01  & 1.00 & 0.97 & 0.92 & 3.35 \\ 
              \hline
$\Psi=(\pi,h)$& 2 & $RE_1$& 1.05 & 1.19 & 1.50 & 2.02 &3.67    & &1.05  & 1.22 & 1.61 & 2.16 & 4.34 \\ [-1.7ex] 
$(0.9,1/3)$   &   & $RE_2$& 1.01 & 1.02 & 0.97 & 0.91 &2.15    & & 1.01 & 1.05 & 1.04 & 0.98 & 2.54 \\ 
              & 3 &$RE_1$ & 1.04 & 1.20 & 1.56 & 2.13 &5.24    & &1.06  & 1.25 & 1.70 & 2.39 & 6.60 \\ [-1.7ex] 
              &   &$RE_2$ & 1.00 & 1.01 & 0.97 & 1.01 &2.05    & & 1.02 & 1.05 & 1.06 & 1.13 &  2.59\\ 
              \hline 
$\Psi=(\pi,h)$& 2 & $RE_1$& 1.02 & 1.09 & 1.23 & 1.46 & 2.85 & &1.03 & 1.11 & 1.27 & 1.57 & 3.57 \\ [-1.7ex] 
$(0.9,1/9)$   &   & $RE_2$& 1.00 & 0.98 & 0.91 & 0.83 & 1.74 & &1.01 & 1.00 & 0.94 & 0.89 & 2.18 \\ 
              & 3 &$RE_1$ & 1.03 & 1.12 & 1.36 & 1.76 & 4.33 & &1.04 & 1.16 & 1.44 & 1.86 & 6.95 \\ [-1.7ex] 
              &   &$RE_2$ & 1.00 & 1.00 & 0.97 & 0.98 & 1.84 & &1.01 & 1.03 & 1.02 & 1.03 & 2.96 \\ 
   \hline                
\end{tabular}
\label{ta:re-imp-b-dc}}}
\end{table}
}

\begin{table}[h!]
\caption{{\footnotesize{\Bc{Values of $RE_2$ to compare the FI content of imperfect PROS$(n,S)$ with imperfect RSS of a fixed set size $S\in\{6,12\}$ under different Dell-Clutter Model}}}}
\vspace{0.3cm} 
\centering 
\footnotesize{\Bc{\begin{tabular}{llllcccccccccccccc}\hline
&             &    &   & \multicolumn{5}{c}{$S=6,~\rho$}     & &   &   &   &  \multicolumn{5}{c}{$S=12,~\rho$}    \\
\cline{5-9} \cline{14-18} 
Distribution  &$S$ &$n$&$N$& 0.25 & 0.50 &  0.75& 0.90 & 1.00  & &$S$ &$n$&$N$& 0.25 & 0.50 &  0.75& 0.90 & 1.00     \\\hline
Normal        & 4  & 2 & 3 & 0.97 & 0.88 & 0.73 & 0.58 & 0.39& & 6 & 2 & 6 & 0.97 & 0.84 & 0.61 & 0.40 & 0.16 \\ [-1.7ex] 
              & 6  & 2 & 3 & 0.98 & 0.89 & 0.75 & 0.60 & 0.44& & 6 & 3 & 4 & 0.98 & 0.90 & 0.70 & 0.49 & 0.25 \\ [-1.7ex]
              & 6  & 3 & 2 & 0.99 & 0.94 & 0.86 & 0.75 & 0.67& &12 & 2 & 6 & 0.97 & 0.87 & 0.67 & 0.45 & 0.21 \\ [-1.7ex]
              & 8  & 2 & 3 & 0.98 & 0.90 & 0.78 & 0.62 & 0.49& &12 & 3 & 4 & 0.98 & 0.91 & 0.77 & 0.57 & 0.39 \\ [-1.7ex]
              & 12 & 2 & 3 & 0.99 & 0.91 & 0.82 & 0.68 & 0.56& &12 & 4 & 3 & 0.99 & 0.95 & 0.83 & 0.68 & 0.55 \\ [-1.7ex]
              & 12 & 3 & 2 & 0.99 & 0.98 & 0.93 & 0.86 & 1.02& &12 & 6 & 2 & 1.00 & 0.99 & 0.92 & 0.85 & 0.81 \\ [-1.7ex]
              & 12 & 6 & 1 & 1.01 & 1.03 & 1.11 & 1.26 & 2.04& &12 &12 & 1 & 1.00 & 1.01 & 1.01 & 1.03 & 1.03 \\ 
  \hline
Exponential   & 4  & 2 & 3 & 1.00 & 1.00 & 1.02 & 1.02 & 1.03& & 6  & 2 & 6 & 0.96 & 0.84 & 0.65 & 0.52 & 0.35 \\[-1.7ex] 
              & 6  & 2 & 3 & 0.98 & 0.90 & 0.78 & 0.69 & 0.65& & 6  & 3 & 4 & 0.97 & 0.87 & 0.73 & 0.62 & 0.45 \\ [-1.7ex]
              & 6  & 3 & 2 & 0.99 & 0.95 & 0.88 & 0.82 & 0.83& & 12 & 2 & 6 & 0.96 & 0.86 & 0.68 & 0.57 & 0.44 \\ [-1.7ex]
              & 8  & 2 & 3 & 0.98 & 0.91 & 0.80 & 0.71 & 0.70& & 12 & 3 & 4 & 0.97 & 0.91 & 0.76 & 0.67 & 0.62 \\[-1.7ex] 
              & 12 & 2 & 3 & 0.98 & 0.92 & 0.82 & 0.75 & 0.80& & 12 & 4 & 3 & 0.98 & 0.94 & 0.84 & 0.76 & 0.76 \\[-1.7ex] 
              & 12 & 3 & 2 & 0.99 & 0.97 & 0.92 & 0.90 & 1.14& & 12 & 6 & 2 & 0.99 & 0.97 & 0.91 & 0.87 & 0.88 \\ [-1.7ex]
              & 12 & 6 & 1 & 1.01 & 1.04 & 1.09 & 1.16 & 1.61& & 12 & 12& 1 & 1.00 & 1.00 & 0.99 & 0.99 & 0.99 \\ 
  \hline
Logistic      & 4  & 2 & 3 & 0.97 & 0.88 & 0.71 & 0.56 & 0.36& & 6  & 2 & 6 & 0.96 & 0.82 & 0.59 & 0.38 & 0.16 \\ [-1.7ex]
              & 6  & 2 & 3 & 0.97 & 0.89 & 0.72 & 0.58 & 0.43& & 6  & 3 & 4 & 0.98 & 0.88 & 0.69 & 0.47 & 0.24 \\ [-1.7ex]
              & 6  & 3 & 2 & 0.99 & 0.94 & 0.85 & 0.74 & 0.66& & 12 & 2 & 6 & 0.96 & 0.87 & 0.64 & 0.43 & 0.20 \\ [-1.7ex]
              & 8  & 2 & 3 & 0.97 & 0.92 & 0.76 & 0.61 & 0.49& & 12 & 3 & 4 & 0.99 & 0.90 & 0.74 & 0.55 & 0.38 \\ [-1.7ex]
              & 12 & 2 & 3 & 0.98 & 0.93 & 0.80 & 0.65 & 0.56& & 12 & 4 & 3 & 0.99 & 0.92 & 0.81 & 0.66 & 0.54 \\ [-1.7ex]
              & 12 & 3 & 2 & 0.99 & 0.99 & 0.91 & 0.87 & 1.08& & 12 & 6 & 2 & 0.99 & 0.96 & 0.88 & 0.81 & 0.75 \\ [-1.7ex]
              & 12 & 6 & 1 & 1.00 & 1.03 & 1.13 & 1.26 & 2.12& & 12 & 12& 1 & 1.00 & 1.00 & 1.00 & 0.98 & 0.99 \\ 
   \hline
\end{tabular}
\label{ta:imp-dlfixtab}}}
\end{table}

\begin{table}[h!]
\caption{{\footnotesize{\Bc{Values of $RE_2$ to compare the FI content of imperfect PROS$(n,S)$ with imperfect RSS of a fixed set size 6.}}}}
\vspace{0.3cm} 
\centering 
\footnotesize{\Bc{\begin{tabular}{llllccccccccccc}\hline
&         &&\multicolumn{11}{c}{$p$}      \\
\cline{5-15}
Distribution  &$S$ &$n$&$N$&0    &0.1   &0.2   &0.3   &0.4   &0.5   &0.6   &0.7   &0.8   &0.9   &1      \\\hline
Normal        &  4 & 2 & 3 &1.75 & 1.49 & 1.25 & 1.03 & 0.83 & 0.67 & 0.56 & 0.48 & 0.42 & 0.38 & 0.37   \\ [-1.7ex]
              & 6  & 2 & 3 &2.03 & 1.63 & 1.33 & 1.05 & 0.83 & 0.67 & 0.56 & 0.49 & 0.44 & 0.42 & 0.43  \\ [-1.7ex]
              & 6  & 3 & 2 &1.49 & 1.24 & 1.07 & 0.93 & 0.82 & 0.74 & 0.69 & 0.66 & 0.64 & 0.63 & 0.65  \\ [-1.7ex]
              & 6  & 6 & 1 &1.00 & 1.00 & 1.00 & 1.00 & 1.00 & 1.00 & 1.00 & 1.00 & 1.00 & 1.00 & 1.00  \\ [-1.7ex]
              &  8 & 2 & 3 &2.23 & 1.73 & 1.38 & 1.07 & 0.84 & 0.67 & 0.56 & 0.50 & 0.46 & 0.45 & 0.47  \\ [-1.7ex]
              & 12 & 2 & 3 &2.59 & 1.92 & 1.48 & 1.11 & 0.85 & 0.67 & 0.57 & 0.52 & 0.49 & 0.49 & 0.54  \\ [-1.7ex]
              & 12 & 3 & 2 &2.09 & 1.46 & 1.13 & 0.93 & 0.83 & 0.79 & 0.80 & 0.82 & 0.86 & 0.91 & 1.01  \\ [-1.7ex]
              & 12 & 6 & 1 &1.22 & 1.03 & 1.01 & 1.08 & 1.20 & 1.36 & 1.51 & 1.66 & 1.80 & 1.93 & 2.05  \\
    \hline
Exponential   &  4 & 2 & 3 &1.51 & 1.34 & 1.18 & 1.03 & 0.89 & 0.77 & 0.69 & 0.63 & 0.59 & 0.56 & 0.56  \\ [-1.7ex]
              & 6  & 2 & 3 &1.71 & 1.44 & 1.23 & 1.05 & 0.89 & 0.77 & 0.69 & 0.64 & 0.61 & 0.61 & 0.64  \\ [-1.7ex]
              & 6  & 3 & 2 &1.33 & 1.17 & 1.05 & 0.95 & 0.88 & 0.83 & 0.80 & 0.79 & 0.79 & 0.80 & 0.82  \\ [-1.7ex]
              & 6  & 6 & 1 &1.00 & 1.00 & 1.00 & 1.00 & 1.00 & 1.00 & 1.00 & 1.00 & 1.00 & 1.00 & 1.00  \\ [-1.7ex]
              &  8 & 2 & 3 &1.89 & 1.54 & 1.28 & 1.07 & 0.90 & 0.77 & 0.70 & 0.65 & 0.64 & 0.65 & 0.70  \\ [-1.7ex]
              & 12 & 2 & 3 &2.11 & 1.65 & 1.34 & 1.09 & 0.90 & 0.77 & 0.70 & 0.67 & 0.66 & 0.69 & 0.78  \\ [-1.7ex]
              & 12 & 3 & 2 &1.66 & 1.30 & 1.09 & 0.96 & 0.89 & 0.87 & 0.88 & 0.91 & 0.96 & 1.02 & 1.11  \\ [-1.7ex]
              & 12 & 6 & 1 &1.14 & 1.02 & 1.00 & 1.05 & 1.14 & 1.24 & 1.34 & 1.42 & 1.50 & 1.56 & 1.62  \\ 
   \hline
Logistic      &  4 & 2 & 3 &1.89 & 1.57 & 1.30 & 1.04 & 0.83 & 0.66 & 0.55 & 0.47 & 0.42 & 0.38 & 0.37  \\ [-1.7ex]
              & 6  & 2 & 3 &2.25 & 1.73 & 1.38 & 1.07 & 0.83 & 0.66 & 0.55 & 0.48 & 0.44 & 0.42 & 0.44  \\ [-1.7ex]
              & 6  & 3 & 2 &1.55 & 1.27 & 1.08 & 0.93 & 0.82 & 0.74 & 0.69 & 0.67 & 0.65 & 0.65 & 0.67  \\ [-1.7ex]
              & 6  & 6 & 1 &1.00 & 1.00 & 1.00 & 1.00 & 1.00 & 1.00 & 1.00 & 1.00 & 1.00 & 1.00 & 1.00  \\ [-1.7ex]
              &  8 & 2 & 3 &2.50 & 1.87 & 1.45 & 1.10 & 0.84 & 0.66 & 0.56 & 0.49 & 0.46 & 0.46 & 0.49  \\ [-1.7ex]
              & 12 & 2 & 3 &2.94 & 2.08 & 1.56 & 1.14 & 0.85 & 0.66 & 0.56 & 0.51 & 0.50 & 0.51 & 0.58  \\ [-1.7ex]
              & 12 & 3 & 2 &2.23 & 1.51 & 1.14 & 0.93 & 0.83 & 0.80 & 0.81 & 0.84 & 0.89 & 0.96 & 1.07  \\ [-1.7ex]
              & 12 & 6 & 1 &1.23 & 1.03 & 1.01 & 1.09 & 1.22 & 1.39 & 1.57 & 1.73 & 1.89 & 2.03 & 2.17  \\ 
  \hline
\end{tabular}
\label{ta:imp-pros-rss-6}}}
\end{table}

\begin{table}[h!]
\caption{{\footnotesize{\Bc{Values of $RE_2$ to compare the FI content of imperfect PROS$(n,S)$ with imperfect RSS of a fixed set size 12.}}}}
\vspace{0.3cm} 
\centering 
\footnotesize{\Bc{\begin{tabular}{llllccccccccccc}\hline
&         &&\multicolumn{11}{c}{$p$}      \\
\cline{5-15}
Distribution  &$S$&$n$&$N$&0    &0.1   &0.2   &0.3   &0.4   &0.5   &0.6   &0.7   &0.8   &0.9   &1      \\\hline
Normal        & 6 & 2 & 6 &2.24 & 1.67 & 1.17 & 0.78 & 0.52 & 0.36 & 0.27 & 0.21 & 0.18 & 0.16 & 0.16   \\ [-1.7ex]
              & 6 & 3 & 4 &1.62 & 1.27 & 0.94 & 0.68 & 0.51 & 0.40 & 0.33 & 0.29 & 0.26 & 0.24 & 0.24  \\ [-1.7ex]
              & 12& 2 & 6 &2.86 & 1.98 & 1.31 & 0.82 & 0.52 & 0.36 & 0.27 & 0.22 & 0.20 & 0.19 & 0.21  \\ [-1.7ex]
              & 12& 3 & 4 &2.27 & 1.49 & 1.00 & 0.69 & 0.51 & 0.43 & 0.38 & 0.36 & 0.35 & 0.35 & 0.37  \\ [-1.7ex]
              & 12& 4 & 3 &1.78 & 1.23 & 0.89 & 0.69 & 0.59 & 0.53 & 0.50 & 0.49 & 0.49 & 0.50 & 0.52  \\ [-1.7ex]
              & 12& 6 & 2 &1.31 & 1.05 & 0.88 & 0.79 & 0.74 & 0.72 & 0.71 & 0.70 & 0.71 & 0.71 & 0.73  \\ [-1.7ex]
              & 12& 12& 1 &1.00 & 1.00 & 1.00 & 1.00 & 1.00 & 1.00 & 1.00 & 0.99 & 0.99 & 0.99 & 0.99  \\ 
 \hline
Exponential   & 6 & 2 & 6 &1.80 & 1.46 & 1.14 & 0.86 & 0.66 & 0.53 & 0.44 & 0.39 & 0.36 & 0.35 & 0.36  \\ [-1.7ex]
              & 6 & 3 & 4 &1.38 & 1.18 & 0.97 & 0.79 & 0.65 & 0.56 & 0.51 & 0.47 & 0.46 & 0.45 & 0.46  \\ [-1.7ex]
              & 12& 2 & 6 &2.23 & 1.68 & 1.24 & 0.90 & 0.67 & 0.53 & 0.44 & 0.40 & 0.39 & 0.40 & 0.44  \\ [-1.7ex]
              & 12& 3 & 4 &1.78 & 1.33 & 1.01 & 0.79 & 0.66 & 0.59 & 0.56 & 0.56 & 0.57 & 0.59 & 0.64  \\ [-1.7ex]
              & 12& 4 & 3 &1.46 & 1.15 & 0.93 & 0.80 & 0.72 & 0.69 & 0.68 & 0.68 & 0.70 & 0.72 & 0.75  \\ [-1.7ex]
              & 12& 6 & 2 &1.19 & 1.03 & 0.93 & 0.87 & 0.84 & 0.83 & 0.84 & 0.85 & 0.86 & 0.87 & 0.89  \\ [-1.7ex]
              & 12& 12& 1 &1.00 & 1.00 & 1.00 & 1.00 & 1.00 & 1.00 & 1.00 & 1.00 & 1.00 & 1.00 & 1.00  \\ 
    \hline
Logistic      & 6 & 2 & 6 &2.41 & 1.76 & 1.21 & 0.78 & 0.51 & 0.35 & 0.25 & 0.20 & 0.17 & 0.16 & 0.16  \\ [-1.7ex]
              & 6 & 3 & 4 &1.69 & 1.30 & 0.95 & 0.67 & 0.50 & 0.39 & 0.32 & 0.28 & 0.26 & 0.24 & 0.24   \\ [-1.7ex]
              & 12& 2 & 6 &3.14 & 2.11 & 1.36 & 0.83 & 0.51 & 0.35 & 0.26 & 0.22 & 0.19 & 0.19 & 0.20  \\ [-1.7ex]
              & 12& 3 & 4 &2.43 & 1.55 & 1.00 & 0.68 & 0.50 & 0.42 & 0.37 & 0.36 & 0.35 & 0.36 & 0.38  \\ [-1.7ex]
              & 12& 4 & 3 &1.87 & 1.25 & 0.89 & 0.69 & 0.58 & 0.53 & 0.51 & 0.50 & 0.51 & 0.52 & 0.55  \\ [-1.7ex]
              & 12 & 6 & 2&1.33 & 1.05 & 0.88 & 0.79 & 0.74 & 0.72 & 0.71 & 0.72 & 0.72 & 0.73 & 0.75  \\ [-1.7ex]
              & 12 & 12& 1&1.00 & 1.00 & 1.00 & 1.00 & 1.00 & 1.00 & 1.00 & 1.00 & 1.00 & 1.00 & 1.00  \\ 
  \hline
\end{tabular}
\label{ta:imp-pros-rss-12}}}
\end{table}

\section{Other Information Criteria} \label{sec:oic}
The concept of information and uncertainty of random samples  is so rich  that 
several  measures have been proposed  to study  different aspects of these concepts.
For example, in the  Engineering studies, the  Shannon 
entropy, \BL{  R\'enyi} entropy and KL information  measures are used more than FI to quantify 
the information and uncertainty structures of random samples.
These measures quantify the amount of uncertainty inherent in the joint probability distribution of a random  sample and  have been applied in many  areas such as   ecological studies, computer sciences and 
information technology,    in \Bc{different  contexts}  including  order statistics, spacings, censored data, 
reliability, life testing, \Bc{record data} and text analysis. For more details see \cite{Joza-Ahma-2013} and \cite{John-2004} and  references therein. 

In this section, we compare the Shannon entropy, R\'enyi  entropy and KL information of PROS
data with SRS and RSS  data of the same size. Throughout this section,  the  
 subsetting process of  PROS design  and  the ranking process 
of  RSS  are assumed  to be perfect.

\subsection{Shannon Entropy of the PROS sample}\label{sub:ent}
Let $X$ be a continuous random variable with pdf $f(\cdot,\dt)$. 
The Shannon entropy associated with  $X$,
 is defined as 
\[
H(X; \dt)=-\int f(x;\dt) \log f(x;\dt) dx,
\]
subject to the existence of the integral. 
\Bc{The
Shannon entropy, as a quantitative measure of information (uncertainty), is
extensively used in information technology, computer science and other engineering
fields.}
In practice, smaller values of the Shannon entropy are more desirable  \citep[see][]{John-2004}.
The Shannon entropy content of a SRS of size $n$ is given by
\[
H_n({\bf X}_{srs}; \dt)=-\sum_{i=1}^{n}\int f(x;\dt)\log f(x;\dt) dx
=n\, H(X_1; \dt).
\]
Similarly, for an RSS of size $n$ (with the  set size $n$)
\[
H_n({\bf X}_{rss}; \dt)=-\sum_{i=1}^{n}\int f^{(i:n)}(x;\dt)\log f^{(i:n)}(x;\dt) dx,
\]
where $f^{(i:n)}(\cdot;\dt)$ is the pdf of the $i$-th order statistic in a SRS of size $n$ from $f(\cdot; \dt)$. Furthermore, for a \BL{ PROS($n, S$)} sample,  it is easy to see that
\[
H_n({\bf X}_{pros}; \dt)=-\sum_{r=1}^{n}\int f_{(d_r)}(y;\dt) \log f_{(d_r)}(y;\dt) dy.
\]
In the following lemma,  we show that the Shannon entropy of PROS data is smaller than 
that of SRS data of the same size. Unfortunately,  we were not able to obtain an ordering relationship  among  the Shannon entropy of RSS and PROS data of the same size. Instead,  we obtain a  lower bound for   the Shannon entropy of a \BL{  PROS($n, S$)} sample in terms of  the Shannon entropy of an RSS data of size $S$ when the set size is $S$.  

\begin{lemma}\label{le:sh-pros-srs}
Let ${\bf X}_{pros}$ be a \BL{ {\rm PROS($n, S$)}} sample  from a population with pdf $f(\cdot; \dt)$ and let $m=S/ n$ be   the number of observations in each subset. Suppose ${\bf X}_{srs}$ is a SRS of size $n$ from $f(\cdot; \dt)$  with the Shannon  entropy   $H_n({\bf X}_{srs}; \dt)$ and  $H_S({\bf X}_{rss}; \dt)$ 
 represent the Shannon  entropy of  an RSS of size $S$ when the set  size is  $S$. Then, 
$$\frac{1}{m}H_S({\bf X}_{rss}; \dt)\le H_n({\bf X}_{pros}; \dt)\le H_n({\bf X}_{srs}; \dt),\quad \text{ for all $n\in N$}. 
$$
\end{lemma}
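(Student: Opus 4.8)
The plan is to reduce both inequalities to a single elementary fact about the differential entropy functional. Writing $H(g)=-\int g(x)\log g(x)\,dx$ for the entropy of a density $g$, the key tool is that $g\mapsto H(g)$ is concave on the set of probability densities, so that for any densities $g_1,\dots,g_k$ and weights $w_1,\dots,w_k\ge0$ with $\sum_i w_i=1$ one has $H\!\left(\sum_i w_i g_i\right)\ge \sum_i w_i\,H(g_i)$. This follows by applying Jensen's inequality pointwise in $x$ to the concave map $\phi(t)=-t\log t$ (indeed $\phi''(t)=-1/t<0$ for $t>0$) and then integrating. I would open the proof by recording the two mixture representations already available in the paper: each PROS marginal is the uniform mixture $f_{(d_r)}(\cdot;\dt)=\frac1m\sum_{u\in d_r}f^{(u:S)}(\cdot;\dt)$ from \eqref{f-per-x}, and the parent density is the uniform mixture $f(\cdot;\dt)=\frac1n\sum_{r=1}^n f_{(d_r)}(\cdot;\dt)$ from \eqref{f-x}. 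In this notation $H_n({\bf X}_{pros};\dt)=\sum_{r=1}^n H(f_{(d_r)})$, $H_n({\bf X}_{srs};\dt)=n\,H(f)$, and $H_S({\bf X}_{rss};\dt)=\sum_{u=1}^S H(f^{(u:S)})$.

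For the upper bound I would apply the mixture inequality with $k=n$, weights $w_r=1/n$ and components $g_r=f_{(d_r)}$. Since $\frac1n\sum_{r=1}^n f_{(d_r)}=f$, concavity gives $H(f)\ge \frac1n\sum_{r=1}^n H(f_{(d_r)})$, that is $\sum_{r=1}^n H(f_{(d_r)})\le n\,H(f)$. The left-hand side is exactly $H_n({\bf X}_{pros};\dt)$ and the right-hand side is $H_n({\bf X}_{srs};\dt)$, which establishes $H_n({\bf X}_{pros};\dt)\le H_n({\bf X}_{srs};\dt)$.

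For the lower bound I would instead apply the same inequality separately inside each subset: for fixed $r$, take $k=m$, weights $1/m$, and the order-statistic densities $\{f^{(u:S)}:u\in d_r\}$, which gives $H(f_{(d_r)})\ge \frac1m\sum_{u\in d_r}H(f^{(u:S)})$. Summing over $r=1,\dots,n$ and using that $\{d_r\}_{r=1}^n$ partitions $\{1,\dots,S\}$ collapses the double sum to $\frac1m\sum_{u=1}^S H(f^{(u:S)})=\frac1m\,H_S({\bf X}_{rss};\dt)$, yielding $H_n({\bf X}_{pros};\dt)\ge \frac1m H_S({\bf X}_{rss};\dt)$.

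The computations are routine once concavity is in hand, so I do not expect a genuine obstacle; the only points requiring care are the existence and finiteness of the differential entropies involved (covered by the standing assumption that the defining integrals exist) and the bookkeeping that the mixture weights and index sets align with the partition $d_r=\{(r-1)m+1,\dots,rm\}$. The conceptual crux, worth stating explicitly, is simply that both bounds are two instances of the one principle that the entropy of a finite mixture dominates the weighted average of its component entropies, applied once across subsets and once within subsets.
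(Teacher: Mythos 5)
Your proposal is correct and follows essentially the same route as the paper: the paper also derives the upper bound by applying the convexity of $h(t)=t\log t$ to the mixture identity $f=\frac{1}{n}\sum_{r}f_{(d_r)}$ from \eqref{f-x}, and the lower bound by applying it within each subset to $f_{(d_r)}=\frac{1}{m}\sum_{u\in d_r}f^{(u:S)}$ from \eqref{f-per-x}. Phrasing both steps as one concavity-of-entropy principle is just a cleaner packaging of the identical argument.
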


\begin{proof} Using \eqref{f-x} and  convexity of  $h(t)=t\log t , t>0$, we have 
\begin{eqnarray*}
H_n({\bf X}_{pros}; \dt) 
&\le&  -n \int \left( \frac{1}{n} \sum_{r=1}^{n} f_{(d_r)}(x;\dt) \right)
\left( \log\left[
\frac{1}{n} \sum_{r=1}^{n} f_{(d_r)}(x;\dt)
\right]\right) dx \\
&=&   H_n({\bf X}_{srs}; \dt).
\end{eqnarray*}
Furthermore, using \eqref{f-per-x} and convexity of  $h(t)=t\log t, t>0$,  we have
\begin{eqnarray*}
H_n({\bf X}_{pros}; \dt) 
&=& - \sum_{r=1}^{n} \int 
\left(
\frac{1}{m} \sum_{u \in d_r} f^{(u:S)}(x;\dt)
\right) 
\left( \log 
\left[
\frac{1}{m} \sum_{u \in d_r} f^{(u:S)}(x;\dt)
\right]\right) dx \\
&\ge& -\frac{1}{m} \sum_{r=1}^{n} \sum_{u \in d_r}
\int f^{(u:S)}(x;\dt) \log f^{(u:S)}(x;\dt) dx \\
&=& \frac{1}{m}  H_S({\bf X}_{rss}; \dt), 
\end{eqnarray*}
which completes the proof.
\end{proof}
\subsection{R\'enyi entropy of PROS data}\label{sub:ren}
In this section we use the \BL{  R\'enyi} entropy as \Bc{a quantitative measure of the entropy} associated with PROS data ${\bf X}_{pros}$.
The \BL{  R\'enyi} entropy  of a random variable $X$ with pdf $f(\cdot; \dt)$ is defined as follows  
\[
H_{\alpha}(X; \dt)=
\frac{1}{1-\alpha} \log \E[f^{\alpha-1}(X; \dt)],
\]
where $\alpha > 0,\alpha \ne 1$.
The \BL{  R\'enyi} entropy is a very   general measure  and includes    the  Shannon entropy  as its   special case  due to the following relationship
\[
\lim_{\alpha \to 1} H_{\alpha}(X; \dt) = - \int f(x; \dt) \log f(x; \dt) dx = H(X; \dt).
\]
Due to the  flexibility  of  the \BL{  R\'enyi} entropy, 
$H_{\alpha}(X; \dt)$ has been used in many fields such as
 statistics, ecology, engineering and etc. 
We derive the \BL{  R\'enyi} entropy  of ${\bf X}_{pros}$ and  compare it with the \BL{  R\'enyi} entropy of  ${\bf X}_{srs}$.  
We present the results for  $0< \alpha < 1$ and the  
  case  with $\alpha >1$,  which requires further investigation,  will be presented in later works.
To this end, the \BL{  R\'enyi} entropy
 of a SRS of size $n$ is given by
 \[
 H_{\alpha, n}({\bf X}_{srs}; \dt)= \frac{1}{1-\alpha} \sum_{i=1}^{n} \log \int f^{\alpha}(x_i;\dt)\, dx_i 
 =n\,H_{\alpha}(X_1; \dt );
 \]
 and for  an RSS with set size $n$, 
 \[
 H_{\alpha, n}({\bf X}_{rss}; \dt)= \frac{1}{1-\alpha} \sum_{i=1}^{n} \log \int [f^{(i:n)}(x;\dt)]^{\alpha} dx.
 \]
  Also, for a \BL{ PROS($n, S)$} sample, one gets
 \[
 H_{\alpha, n}({\bf X}_{pros}; \dt)= \frac{1}{1-\alpha} \sum_{r=1}^{n} \log \int [f_{(d_r)}(x;\dt)]^{\alpha} dx.
 \]

 \begin{lemma}\label{le:rey}
 Let $H_{\alpha, n}({\bf X}_{pros}; \dt)$ represent the \BL{  R\'enyi} entropy of  a \BL{ {\rm PROS($n, S$)}}  sample of size $n$  from a population with pdf $f(\cdot; \dt)$.  Suppose ${\bf X}_{srs}$ and ${\bf X}^*_{rss}$ be  a SRS  of size $n$ and \Bc{an RSS of size $S$} (with the set size $S$) from $f(\cdot; \dt)$, respectively. For any $0<\alpha <1$ and all $n \in N$, we have
 \[
\frac{1}{m}H_{\alpha, S}({\bf X}^*_{rss}; \dt)\le H_{\alpha, n}({\bf X}_{pros}; \dt) \le H_{\alpha, n}({\bf X}_{srs}; \dt).
 \]
 \end{lemma}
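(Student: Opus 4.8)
The plan is to mirror the two-sided argument used for the Shannon entropy in Lemma \ref{le:sh-pros-srs}, establishing the upper and lower bounds separately. The whole argument rests on two facts that both hold precisely because $0<\alpha<1$: first, $1/(1-\alpha)>0$, so every inequality may be multiplied through by this factor without reversing; second, the map $t\mapsto t^{\alpha}$ is concave on $(0,\infty)$. The only new wrinkle compared with the Shannon case is the ``$\log\int$'' structure of the R\'enyi entropy, which prevents a purely pointwise comparison of integrands. I would resolve this by combining concavity of $t^{\alpha}$ (applied pointwise to the integrands) with concavity of $\log$, i.e.\ the arithmetic--geometric mean inequality $\frac1k\sum_j\log a_j\le\log(\frac1k\sum_j a_j)$, which handles the logarithm sitting outside the integral.

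For the upper bound $H_{\alpha,n}({\bf X}_{pros};\dt)\le H_{\alpha,n}({\bf X}_{srs};\dt)$, I would start from $f(x;\dt)=\frac1n\sum_{r=1}^n f_{(d_r)}(x;\dt)$ in \eqref{f-x}. Setting $a_r=\int [f_{(d_r)}(x;\dt)]^{\alpha}\,dx$, the AM--GM inequality gives $\frac1n\sum_r\log a_r\le\log(\frac1n\sum_r a_r)=\log\int \frac1n\sum_r[f_{(d_r)}]^{\alpha}\,dx$. Concavity of $t\mapsto t^{\alpha}$ applied pointwise yields $\frac1n\sum_r[f_{(d_r)}(x;\dt)]^{\alpha}\le\big(\frac1n\sum_r f_{(d_r)}(x;\dt)\big)^{\alpha}=f^{\alpha}(x;\dt)$, and integrating and taking logs chains the two bounds to $\frac1n\sum_r\log a_r\le\log\int f^{\alpha}(x;\dt)\,dx$. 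Multiplying by $n/(1-\alpha)>0$ turns the left side into $H_{\alpha,n}({\bf X}_{pros};\dt)$ and the right side into $n\,H_{\alpha}(X_1;\dt)=H_{\alpha,n}({\bf X}_{srs};\dt)$.

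For the lower bound $\frac1m H_{\alpha,S}({\bf X}^*_{rss};\dt)\le H_{\alpha,n}({\bf X}_{pros};\dt)$, I would argue term-by-term in $r$, starting from $f_{(d_r)}(x;\dt)=\frac1m\sum_{u\in d_r}f^{(u:S)}(x;\dt)$ in \eqref{f-per-x}. Concavity of $t\mapsto t^{\alpha}$ now gives $\frac1m\sum_{u\in d_r}[f^{(u:S)}(x;\dt)]^{\alpha}\le[f_{(d_r)}(x;\dt)]^{\alpha}$ pointwise, hence $\frac1m\sum_{u\in d_r}\int[f^{(u:S)}]^{\alpha}\,dx\le\int[f_{(d_r)}]^{\alpha}\,dx$ after integration; and AM--GM applied to the $m$ numbers $\int[f^{(u:S)}]^{\alpha}\,dx$ gives $\frac1m\sum_{u\in d_r}\log\int[f^{(u:S)}]^{\alpha}\,dx\le\log(\frac1m\sum_{u\in d_r}\int[f^{(u:S)}]^{\alpha}\,dx)$. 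Chaining these and summing over $r$, while using that $\{d_r\}_{r=1}^n$ partitions $\{1,\dots,S\}$ so that $\sum_{r}\sum_{u\in d_r}=\sum_{u=1}^S$, gives the claim after multiplication by $1/(1-\alpha)$.

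The argument is essentially mechanical once the two tools are identified, so the main thing to watch is orientation: both $\log$ and $t\mapsto t^{\alpha}$ are concave here, and one must check that the two Jensen-type steps chain in a consistent direction at each bound (they do, which is exactly what produces the sandwich). I expect the only genuine obstacle to be conceptual rather than computational, namely recognizing why the restriction $0<\alpha<1$ is indispensable: it controls both the sign of $1/(1-\alpha)$ and the concavity of $t\mapsto t^{\alpha}$. For $\alpha>1$ both of these reverse, so neither bound survives unchanged, which is precisely why that case is deferred.
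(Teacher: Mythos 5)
Your proposal is correct and follows essentially the same route as the paper: both bounds are obtained by chaining Jensen's inequality for the concave map $t\mapsto t^{\alpha}$ (applied pointwise to the mixture representations \eqref{f-x} and \eqref{f-per-x}) with the concavity of $\log$ applied to the resulting integrals, and then using the positivity of $1/(1-\alpha)$ and the fact that $\{d_r\}$ partitions $\{1,\dots,S\}$. No gaps.
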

\begin{proof} 
  By using \eqref{f-per-x} and the concavity of the functions  
  $h_1(t)=\log t$ and $h_2(t)=t^{\alpha}$, we have  
\begin{eqnarray*}
H_{\alpha, n}({\bf X}_{pros}; \dt) 
&\le & \frac{n}{1-\alpha} 
\left[
\log \int \frac{1}{n} \sum_{r=1}^{n} 
\left(
\frac{1}{m} \sum_{u \in d_r} f^{(u:S)}(x;\dt)
\right)^{\alpha} dx
\right]\\
&\le & \frac{n}{1-\alpha} 
\log \int 
\left(
\frac{1}{S} \sum_{r=1}^{n} \sum_{u \in d_r} f^{(u:S)}(x;\dt)
\right)^{\alpha} dx \\
&=&  H_{\alpha, n}({\bf X}_{srs}; \dt).  
\end{eqnarray*}
Similarly, one can show the following inequalities 
\begin{eqnarray*}
H_{\alpha, n}({\bf X}_{pros}; \dt) 
&\ge & \frac{1}{1-\alpha} \sum_{r=1}^{n} \log 
\left(
\frac{1}{m} \sum_{u \in d_r}  \int [f^{(u:S)}(x;\dt)]^{\alpha} dx
\right)\\
&\ge & \frac{1}{m(1-\alpha)} \sum_{r=1}^{n} \sum_{u \in d_r} \log 
\left(
  \int [f^{(u:S)}(x;\dt)]^{\alpha} dx
\right)\\
&=& \frac{1}{m} H_{\alpha, S}({\bf X}^*_{rss}; \dt), 
\end{eqnarray*}
 which complete the proof.
\end{proof}
\subsection{KL Information of the PROS technique}\label{sub:kul}

The Kullback-Leibler (KL) discrepancy  is another 
measure  which can be used to quantify the information regarding a random phenomenon by comparing two 
probability density functions corresponding to a random experiment.
Consider  two  pdfs $f(\cdot; \dt)$ and $g(\cdot; \dt)$. The KL information measure based on $f(\cdot; \dt)$ and $g(\cdot; \dt)$ 
is defined by
\begin{eqnarray*}
K(f, g)=\int f(t;\dt) \log 
\left(
\frac{f(t;\dt)}{g(t;\dt)}
\right) dt,
\end{eqnarray*} 
which quantifies  the information lost  by  
 using $g(\cdot;\dt)$ for the density of the random variable  $X$ instead of $f(\cdot;\dt)$. 
In this section,  using the  KL measure we   make a comparison among PROS sampling,  simple random sampling  
and ranked set sampling  designs 
to determine which design provides 
more informative samples from   the underlying population. 
To this end, we use 
\begin{eqnarray}\label{kl-lik}
K\left(L_{pros}(\dt|{\bf y}),L_{srs}(\dt|{\bf y})\right)=\oint L_{pros}(\dt|{\bf y}) \log 
\left(
\frac{L_{pros}(\dt|{\bf y})}{L_{srs}(\dt|{\bf y})}
\right) d{\bf y},
\end{eqnarray}
 to   compare  \BL{ PROS($n, S$)} and simple random sampling designs, where $L_{pros}(\dt|{\bf y})$ and $L_{srs} \dt|{\bf y})$ denote the likelihood functions of PROS and SRS data of the same size, respectively.
The KL information measure \Bc{for comparing  ranked set sampling and simple random sampling}  is defined similarly by using \eqref{kl-lik} and setting $S=n$ in PROS sampling design.    One  can interpret \eqref{kl-lik} in terms of a 
hypothesis testing problem  within the Neyman-Pearson log-likelihood ratio  testing  framework \citep[see][]{John-2004}.
%

\begin{lemma}\label{le:kl-simp}
Let $L_{pros}(\dt|{\bf y})$ and $L_{srs}(\dt|{\bf y})$ denote,   respectively,  the likelihood functions of a  \BL{ {\rm PROS($n, S$)}} sample   and a  SRS  of size $n$ from a population with pdf $f(\cdot; \dt)$. Then we have
\[
K\left(L_{pros}(\dt|{\bf y}), L_{srs}(\dt|{\bf y})\right)=\sum_{r=1}^{n} \int
f_{(d_r)}(y;\dt) \log
\left(
\frac{f_{(d_r)}(y;\dt)}{f(y;\dt)}
\right) dy.
\]
\end{lemma}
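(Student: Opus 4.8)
The plan is to exploit the independence of observations across different sets in both designs, which renders each likelihood a product of one-dimensional densities, and then to let the logarithm turn that product into a sum that decouples the $n$-fold integral.

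First I would write out the two likelihoods explicitly. Since the $n$ PROS observations $X_{(d_1)},\ldots,X_{(d_n)}$ are drawn from independent sets, their joint density factorizes as $L_{pros}(\dt|{\bf y})=\prod_{r=1}^{n} f_{(d_r)}(y_r;\dt)$, where $f_{(d_r)}$ is the marginal given in \eqref{f-per-x}; likewise the SRS likelihood factorizes as $L_{srs}(\dt|{\bf y})=\prod_{r=1}^{n} f(y_r;\dt)$. Substituting these products into the definition \eqref{kl-lik}, the log-ratio splits additively, namely $\log\bigl(L_{pros}/L_{srs}\bigr)=\sum_{r=1}^{n}\log\bigl(f_{(d_r)}(y_r;\dt)/f(y_r;\dt)\bigr)$.

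Next I would multiply this sum by $L_{pros}(\dt|{\bf y})$ and interchange the finite sum with the $n$-fold integral. For each fixed $r$, integrating the resulting term over the $n-1$ coordinates $y_j$, $j\neq r$, against $\prod_{j\neq r} f_{(d_j)}(y_j;\dt)$ yields $1$, because each $f_{(d_j)}$ is a density integrating to one. Only the $r$-th coordinate survives, leaving $\int f_{(d_r)}(y;\dt)\,\log\bigl(f_{(d_r)}(y;\dt)/f(y;\dt)\bigr)\,dy$ after renaming $y_r$ to $y$; summing over $r$ produces the claimed identity.

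The computation is essentially routine, so I do not expect a serious obstacle. The only points that merit care are the justification for interchanging the finite sum with the integral (Fubini, granted the integrability that is implicit in the existence of the KL measure) and the fact that each $f_{(d_r)}$ integrates to one, which follows from \eqref{f-per-x} together with each $f^{(u:S)}$ being a density. I would also remark that each summand is itself a KL divergence $K(f_{(d_r)},f)\ge 0$, so the identity exhibits $K(L_{pros},L_{srs})$ as a sum of nonnegative, set-by-set discrimination contributions, which is the interpretation used in the comparisons that follow.
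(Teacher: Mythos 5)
Your proposal is correct and follows essentially the same route as the paper: factorize both likelihoods by independence, split the logarithm of the ratio into a sum, and integrate out the remaining $n-1$ coordinates using the fact that each $f_{(d_j)}$ integrates to one. Your added remarks on Fubini and on each summand being a nonnegative KL divergence $K(f_{(d_r)},f)$ are sound but not needed beyond what the paper already does.
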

\begin{proof} To show the result,  using  \eqref{kl-lik} we have 
\begin{align*}
K\left(L_{pros}(\dt|{\bf y}),L_{srs}(\dt|{\bf y})\right) 
&= \sum_{r=1}^{n}\oint
\left\{
\prod_{h=1}^{n} f_{(d_h)}(y_h;\dt) 
\right\}
\log 
\left(
\frac{f_{(d_r)}(y_r;\dt)}{f(y_r;\dt)}
\right)
\left\{
\prod_{j=1}^{n} dy_j 
\right\} \\
&= \sum_{r=1}^{n}\int 
 f_{(d_r)}(y;\dt) 
\log 
\left(
\frac{f_{(d_r)}(y;\dt)}{f(y;\dt)}
\right)
 dy;
\end{align*}
where the  last equality   follows from the independence  of  observations  and the fact that   $n-1$ of the integrals are 1. \end{proof}

\noindent In the following lemma, we show that KL information distance between the 
likelihoods of  PROS and  SRS sampling designs is greater than the one between 
the likelihoods of two SRS sampling designs. Hence, PROS data are more informative than SRS data about the underlying population.  We also obtain a lower bound for the KL information  between 
the likelihoods of PROS and SRS data of the same size.
\begin{lemma}\label{le:kl-pros-srs} Let $L_{pros}(\dt|{\bf y})$ 
denote the likelihood function  of a \BL{ {\rm PROS($n, S$)}} sample  from a population with pdf $f(\cdot, \dt)$. Suppose  $L_{srs,1}(\dt|{\bf y})$ and $L_{srs,2}(\dt|{\bf y})$ denote 
 the  likelihood functions  of  simple random samples of size $n$ from  $f(\cdot;\dt)$ and $g(\cdot;\dt)$, respectively.  In addition, let $L_{rss^*}(\dt| {\bf y})$  represent the likelihood function of a RSS of size $S$ when the set size is $S$. Then,
\[
K\left(L_{srs,1}(\dt|{\bf y}),L_{srs,2}(\dt|{\bf y})\right) \leq K\left(L_{pros}(\dt|{\bf y}),L_{srs,2}(\dt|{\bf y})\right)\leq  \frac{1}{m} K
  \left(\tilde{L}_{rss^*}(\dt|{\bf y}),L_{srs,2}(\dt|{\bf y})
  \right).  
\] 
\end{lemma}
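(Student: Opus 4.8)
The plan is to reduce all three Kullback--Leibler quantities to sums of univariate KL divergences and then exploit the convexity of the map $t \mapsto t\log(t/c)$ together with the two mixture identities \eqref{f-x} and \eqref{f-per-x}. First I would argue, exactly as in Lemma \ref{le:kl-simp}, that because each likelihood factorizes over the independent coordinates and the extraneous marginals integrate to one, one obtains
\[
K(L_{srs,1}, L_{srs,2}) = n\, K(f,g), \qquad K(L_{pros}, L_{srs,2}) = \sum_{r=1}^{n} K(f_{(d_r)}, g),
\]
and similarly $K(\tilde{L}_{rss^*}, L_{srs,2}) = \sum_{u=1}^{S} K(f^{(u:S)}, g)$, where I write $K(p,g)=\int p(y;\dt)\log\{p(y;\dt)/g(y;\dt)\}\,dy$. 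This converts the multivariate statement into a pointwise convexity exercise on the marginal densities.

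For the left inequality, I would fix $y$ and apply the convexity of $h_c(t)=t\log(t/c)$ (with $c=g(y;\dt)$) to the representation $f=\frac{1}{n}\sum_{r=1}^{n} f_{(d_r)}$ from \eqref{f-x}. Jensen's inequality gives $f(y)\log\{f(y)/g(y)\}\le \frac{1}{n}\sum_{r} f_{(d_r)}(y)\log\{f_{(d_r)}(y)/g(y)\}$; integrating in $y$ and multiplying by $n$ yields $n\,K(f,g)\le \sum_{r} K(f_{(d_r)},g)$, which is precisely $K(L_{srs,1},L_{srs,2})\le K(L_{pros},L_{srs,2})$.

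For the right inequality, I would apply the same convexity, this time at the subset level, to the representation $f_{(d_r)}=\frac{1}{m}\sum_{u\in d_r} f^{(u:S)}$ from \eqref{f-per-x}. This gives $K(f_{(d_r)},g)\le \frac{1}{m}\sum_{u\in d_r} K(f^{(u:S)},g)$ for each $r$. Summing over $r=1,\ldots,n$ and using that the subsets $\{d_r\}$ partition $\{1,\ldots,S\}$, the double sum collapses to $\frac{1}{m}\sum_{u=1}^{S} K(f^{(u:S)},g)$, which equals $\frac{1}{m}K(\tilde{L}_{rss^*}, L_{srs,2})$. Hence $K(L_{pros},L_{srs,2})\le \frac{1}{m}K(\tilde{L}_{rss^*},L_{srs,2})$.

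The only genuinely delicate point is the bookkeeping across scales: the PROS and SRS likelihoods live on $n$ coordinates while the RSS likelihood lives on $S$ coordinates, so the normalizing factor $1/m=n/S$ is what reconciles the two, and one must confirm that $\tilde{L}_{rss^*}$ is compared against the size-$S$ reference SRS from $g$ so that the right-hand sum indeed runs over all $u=1,\ldots,S$. Everything else is a routine application of a single convexity inequality at two nested levels of the mixture hierarchy SRS $\leftarrow$ PROS $\leftarrow$ RSS.
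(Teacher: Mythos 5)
Your argument is correct and follows essentially the same route as the paper's proof: reduce each multivariate KL to a sum of univariate divergences via Lemma \ref{le:kl-simp}, then apply Jensen's inequality to the mixture representations \eqref{f-x} and \eqref{f-per-x} at the two levels (your convexity of $t\mapsto t\log(t/c)$ is the same step the paper writes as convexity of $t\log t$ applied to the ratios $f_{(d_r)}/g$ and $f^{(u:S)}/g$ weighted by $g$). Your closing remark about the factor $1/m$ reconciling the $n$-coordinate and $S$-coordinate likelihoods is a fair reading of the bookkeeping the paper leaves implicit.
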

\begin{proof} Applying Lemma \ref{le:kl-simp} and  using the convexity  of  $h(t)=t\log t$, $t>0$,  we derive
\begin{eqnarray*}
K\left(L_{pros}(\dt | {\bf y}),L_{srs,2}(\dt | {\bf y})\right) 
&=& \sum_{r=1}^{n} \int g(y;\dt) 
\left(
\frac{f_{(d_r)}(y;\dt)}{g(y;\dt)}
\right)
\log 
\left(
\frac{f_{(d_r)}(x;\dt)}{g(y;\dt)} 
\right) dy \\
& \ge & n \int g(y;\dt) 
\left[
\frac{1}{n}\sum_{r=1}^{n} 
\frac{f_{(d_r)}(y;\dt)}{g(y;\dt)}
\right]
\log 
\left[
\frac{\frac{1}{n} \sum_{r=1}^{n}f_{(d_r)}(y;\dt)}{g(y;\dt)} 
\right] dy \\
&=& n \int f(y;\dt) \log 
\left(
\frac{f(y;\dt)}{g(y;\dt)}
\right) dy \\
&=& K\left(L_{srs,1}(\dt),L_{srs,2}(\dt)\right),
\end{eqnarray*}
which shows  the first inequality.
Similarly, 
\begin{align*}
K
  \left(L_{pros}(\dt|{\bf y}),L_{srs,2}(\dt|{\bf y})
  \right) 
&= \sum_{r=1}^{n} \int g(y;\dt)  
\left( 
\frac{1}{m} \sum_{u\in d_r}
\frac{f^{(u:S)}(y;\dt)}{g(y;\dt)}
\right)
\log 
\left(
\frac{1}{m} \sum_{u\in d_r}
\frac{f^{(u:S)}(y;\dt)}{g(y;\dt)} 
\right) dy \\
& \le \frac{1}{m}  \sum_{v=1}^{S}  \int f^{(v:S)}(y;\dt)
\log 
\left(
\frac{f^{(v:S)}(x;\dt)}{g(y;\dt)} 
\right) dy \\
&= \frac{1}{m} K
  \left({L}_{rss^*}(\dt|{\bf y}),L_{srs,2}(\dt|{\bf y})
  \right),
  \end{align*}
which completes the proof.
\end{proof}
\section{Concluding Remarks}\label{concluding}
In this paper, we have considered the information content and uncertainty associated with PROS samples from a population.  First, we have compared the FI content of PROS samples with the FI content of SRS and RSS data of the same size under both perfect and imperfect  subsetting assumptions. We showed that \Bc{PROS sampling  design results in} more informative observations from the underlying population than simple random sampling and ranked set sampling.  Some \Ba{examples} are presented to show the amount of the extra information provided by PROS sampling design. We have then considered other information and uncertainty measures such as the Shannon entropy, R\'enyi entropy and the KL information measures. Similar results have been obtained under the perfect subsetting assumption. It would naturally be of interest to extend these results to imperfect subsetting situations. The results of this paper suggest  that one might be able to obtain more powerful tests for testing hypothesis or model selection  problems   based on PROS data. For example,  it seems promising to develop goodness of fit tests based on PROS data  under KL information measure. We believe that further investigation of PROS sampling design  under  the missing information criterion as in  \cite{Hatefi201316}  is of interest and appealing as well.

\section*{Acknowledgement}
This research has been done when Armin Hatefi was a PhD student at the University of Manitoba and his research was supported by the University of Manitoba Graduate Fellowship (UMGF) and 
Manitoba Graduate Scholarship (MGS). 
Mohammad Jafari Jozani gratefully acknowledges the research support of the NSERC Canada.
\Bc{ \section*{Appendix:\\ FI of unbalanced PROS and the effect of misplacement errors}}
 
In this section, we study the FI matrix of the unbalanced PROS sampling design in a general setting when the subsets are allowed to be of different sizes.
 To obtain an unbalanced PROS sample , we first need to determine the  sample of size $K$ and set size $S$. Judgment sub-setting process is then applied to create $K$ sets. We group these $K$ sets into $N$ cycles $G_i=\{S_{1,i},\ldots,S_{n_i,i}\};\, i=1,\ldots,N$, where $\sum_{i=1}^{N}n_i=K$.
 Let $D_{r,i}=\{{d_{r[1]i}},\ldots,d_{r[n_i]i}\}$  be the design parameter associated with set $S_{r,i}$, where ${d_{r[l]i}}; l=1,\ldots,n_i$ is the $l$-th judgment subset in the set $S_{r,i}$. In each cycle $G_i; i=1,\ldots,N$, we randomly select a unit from one of the sets (particularly from the judgment subset $d_{r[r]i}; r=1,\ldots, n_i$) for full measurement, say $X_{[d_r]i}$ and the number of unranked units in subset $d_{r[r]i}$ is denoted by $m_{ri}; r=1,\ldots,n_i; i=1,\ldots,N$. To this end, the collection of measured observations $\{X_{[d_r]i};r=1,\ldots,n_i;i=1,\ldots,N\}$ is an unbalanced PROS sample of size $K=\sum_{i=1}^{N}n_i$. Table \ref{ta:upros-ex} illustrates the construction of an unbalanced PROS sample of size of $K=5$ with set size $S=6$ and cycle size $N=2$ so that in the first cycle we declare three subsets $n_1=3$   and  two subsets $n_2=2$ of different sizes in the first and second cycles, respectively. In each set, $m_{ri}$ represent the number of unranked units in the selected subset. For more details  see \cite {Oztu-11-ees}. 
  \begin{table}[h!]
\begin{center}
\caption{\small An example of unbalanced PROS design when $S=6, K=5, N=2, n_1=3, n_2=2$ and $m_{ri}$ represents size of the selected subset in each set.}
\vspace{0.5cm}
{\begin{tabular}{ccccc} \hline\hline
cycle & set &  Subsets & $m_{ri}$ &Observation \\ \hline
 1    & $S_{1,1}$ & $D_{1,1}=\{\mbold{d_{1[1]1}},d_{1[2]1},d_{1[3]1}\}=\{ \mbold{\{1,2,3\}},\{4,5\},\{6\}\}$  & 3 &$X_{[d_1]1}$  \\ 
      & $S_{2,1}$ & $D_{2,1}=\{d_{2[1]1},\mbold{d_{2[2]1}},d_{2[3]1}\}=\{ \{1,2,3\},\mbold{\{4,5\}},\{6\} \}$ & 2&$X_{[d_2]1}$  \\
      & $S_{3,1}$ & $D_{3,1}=\{d_{3[1]1},d_{3[2]1},\mbold{d_{3[3]1}}\}=\{ \{1,2,3\},\{4,5\},\mbold{\{6\}} \}$ & 1&$X_{[d_3]1}$  \\\hline
 2    & $S_{1,2}$ & $D_{1,2}=\{\mbold{d_{1[1]2}},d_{1[2]2}\}=\{ \mbold{\{1,2\}},\{3,4,5,6\}\} $               & 2&$X_{[d_1]2}$  \\
      & $S_{2,2}$ & $D_{2,2}=\{d_{2[1]2},\mbold{d_{2[2]2}}\}=\{ \{1,2\},\mbold{\{3,4,5,6\}}\} $               & 4&$X_{[d_2]2}$  \\\hline\hline
 \end{tabular}}
  \label{ta:upros-ex}
 \end{center}  
 \end{table}

\noindent We fist present the following result. 
 
\begin{lemma}\label{le-frm}
Let $Y_{ri}=X_{[d_r]i}$  be an observation from unbalanced PROS sampling design 
from  a continuous distribution with pdf $f(\cdot;\dt)$. With the knowledge of 
the design parameter $D_{r,i}$, the pdf of $Y_{ri}$ is given by

\[ f_{[r;m_{ri}]}(y;\dt)= \frac{1}{m_{ri}} \sum_{v\in d_{r[r]i}} f^{[v:S]}(y;\dt),
\]
where $f^{[v:S]}(y;\dt)$ is the pdf of the $v$-th judgment order statistics between $S$ data.
\end{lemma}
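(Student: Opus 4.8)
The plan is to derive the density of the measured unit $Y_{ri}=X_{[d_r]i}$ by a conditioning argument on the (latent) judgment position from which the unit is drawn, exactly paralleling the derivation of the balanced marginal density in equation \eqref{f-per-x} but allowing the selected subset $d_{r[r]i}$ to have its own size $m_{ri}$ rather than the fixed value $m=S/n$.

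First, I would condition on the design parameter $D_{r,i}$, which fixes the partition of $\{1,\dots,S\}$ into judgment subsets and, in particular, identifies the selected subset $d_{r[r]i}$ together with its cardinality $m_{ri}$. By the sampling protocol, one unit is drawn uniformly at random from the $m_{ri}$ members of $d_{r[r]i}$; hence, if $V$ denotes the judgment rank (among the $S$ units in the set) of the unit that is ultimately measured, then $V$ takes each value $v\in d_{r[r]i}$ with equal probability, i.e. $\P(V=v\mid D_{r,i})=1/m_{ri}$ for every $v\in d_{r[r]i}$.

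Second, I would use that, given $V=v$, the measured unit is by definition the $v$-th judgment order statistic in a set of size $S$, so its conditional density is $f^{[v:S]}(y;\dt)$. Applying the law of total probability over the latent position $V$ then yields
\[
f_{[r;m_{ri}]}(y;\dt)=\sum_{v\in d_{r[r]i}}\P(V=v\mid D_{r,i})\,f^{[v:S]}(y;\dt)=\frac{1}{m_{ri}}\sum_{v\in d_{r[r]i}}f^{[v:S]}(y;\dt),
\]
which is the claimed expression.

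There is no serious obstacle here; the only point requiring care is to track that the within-subset selection is uniform over exactly $m_{ri}$ positions, so that the mixing weights equal $1/m_{ri}$ and sum to one over $d_{r[r]i}$. This is precisely the feature that accommodates subsets of unequal sizes, and setting all $m_{ri}$ equal (and taking the judgment order statistics to coincide with the true ones) recovers the balanced identity \eqref{f-per-x}.
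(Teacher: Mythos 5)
Your proof is correct and follows essentially the same route as the paper: the paper introduces a latent indicator vector $\bd^{[d_r]i}$ recording the within-subset position and marginalizes the joint density of $(Y_{ri},\bd^{[d_r]i})$ over it, which is exactly your law-of-total-probability argument over the latent rank $V$ written in one-hot notation. The key observation in both cases is that the within-subset selection is uniform over the $m_{ri}$ positions of $d_{r[r]i}$, giving the mixing weights $1/m_{ri}$.
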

\begin{proof}
 For each  $Y_{ri}$  define the  latent vector  
$\bd^{[d_r]i}= \left(\ld^{[d_r]i}(v), v\in d_{r[r]i}\right)$,   where 
\[  
\ld^{[d_r]i}(v) = 
\left\{
       \begin{array}{ll}
       1 & \mbox{if  $Y_{ri}$ is selected from the $v$-th position within the subset $d_{r[r]i}$}; \\
       0 & \mbox{otherwise},
       \end{array} 
\right. 
\]
with  $\sum_{v \in d_{r[r]i}} \ld^{[d_r]i}(v)=1$. The joint pdf of  
 $(Y_{ri},\bd^{[d_r]i})$  is  given by 
\begin{eqnarray}\nonumber
f(y,\dd^{[d_r]i};\dt)= \prod_{r=1}^{n_i}\prod_{v \in d_{r[r]i}} 
\left\{\frac{1}{m_{ri}} f^{[v:S]}(y;\dt)\right\}^{\dd^{[d_r]i}(v)}.
\end{eqnarray}
%
Furthermore,  by summing the  joint distribution of $(Y_{ri},\bd^{[d_r]i})$ over  $\bd^{[d_r]i}=\dd^{[d_r]i}$,    the marginal distribution of $Y_{ri}$ 
 is obtained  as follows
\[ f_{[r;m_{ri}]}(y;\dt)=     \sum_{\dd^{[d_r]i}} 
            f(y,\dd^{[d_r]i};\dt) =
             \frac{1}{m_{ri}} \sum_{v\in d_{r[r]i}} f^{[v:S]}(y;\dt).
\]
 
\end{proof}
\noindent Using Lemma \ref{le-frm}, the likelihood function under an unbalanced PROS design     is now  given by
\begin{eqnarray}\label{l-uimp}\nonumber
L(\Omega)
&=& \prod_{i=1}^{N}\prod_{r=1}^{n_i} f_{[r;m_{ri}]}(y_{ri};\dt) =
 \prod_{i=1}^{N}\prod_{r=1}^{n_i} 
\left\{
\frac{1}{m_{ri}} \sum_{v\in d_{r[r]i}} f^{[v:S]}(y_{ri};\dt)
\right\} \\
&=& \prod_{i=1}^{N}\prod_{r=1}^{n_i} 
\left\{
\frac{1}{m_{ri}} \sum_{v\in d_{r[r]i}}
 \sum_{h=1}^{n_i} \sum_{u\in d_{h[h]i}} \frac{\alpha_{[d_r,d_h]i}}{m_{hi}} f^{(u:S)}(y_{ri};\dt)
\right\},
\end{eqnarray}
where $\Omega= (\dt, \balpha)$, $f^{(u:S)}(\cdot;\dt)$ is the pdf of the $u$-th order statistics and in a similar vein to Subsection \ref{sub:error}, $\alpha_{[d_r,d_h]i}$ is considered as the misplacement probability of a unit from subset $d_{h[h]i}$ into subset $d_{r[r]i}$ so that $\sum_{h=1}^{n_i} \alpha_{[d_r,d_h]i}=\sum_{r=1}^{n_i} \alpha_{[d_r,d_h]i} =1; i=1,\ldots,N$. Similarly, one can re-write the likelihood function \eqref{l-uimp} as follows

\begin{eqnarray*}\label{l-u2imp}
L(\Omega)
= \prod_{i=1}^{N}\prod_{r=1}^{n_i} f_{[r;m_{ri}]}(y_{ri};\dt) 
= \prod_{i=1}^{N}\prod_{r=1}^{n_i} f(y_{ri};\dt)~ g_{ri}(y_{ri};\dt), 
\end{eqnarray*} 
where 
\begin{eqnarray}\label{u-gri}
g_{ri}(y;\dt)=  
\sum_{h=1}^{n_i} \sum_{u\in d_{h[h]i}} \alpha_{[d_r,d_h]i} \frac{S}{m_{hi}}
{{S-1}\choose{u-1}} [F(y;\dt)]^{u-1} [1-F(y;\dt)]^{S-u}.
\end{eqnarray}
\noindent Similar to Subsection \ref{sub:error}, to obtain the FI matrix of an unbalanced  PROS sample and compare it with its SRS and RSS counterparts
one can easily obtain the following result.  


{\begin{table}[h!]
\caption{{\footnotesize{Values of $RE_1$ and $RE_2$ to compare the FI content of unbalanced PROS data with its SRS and RSS counterparts  of the same size for normal distribution when $S=6$ and $n\in\{2,3\}$. }}}
\vspace{0.3cm} 
\centering 
\footnotesize{{\begin{tabular}{lcccccc}\hline
            &         &\multicolumn{5}{c}{$\rho$}      \\
\cline{3-7}
$D=\{d_1,\ldots,d_n\}$        & Design &0.25   &0.50   &0.75   &0.90   &1.00         \\\hline
$\{\{1,2,3,4,5\},\{6\}\}$     &  $RE_1$& 1.134 & 1.823 & 3.094 & 4.754 & 8.026 \\ [-1.7ex]
                              &  $RE_2$& 1.110 & 1.666 & 2.412 & 3.006 & 4.768  \\ 
$\{\{1,2,3,4\},\{5,6\}\}$     &  $RE_1$& 1.038 & 1.151 & 1.343 & 1.510 & 1.613  \\ [-1.7ex]
                              &  $RE_2$& 1.018 & 1.064 & 1.046 & 0.962 & 0.968  \\ 
$\{\{1,2,3\},\{4,5,6\}\}$     &  $RE_1$& 1.020 & 1.095 & 1.271 & 1.513 & 2.507  \\ [-1.7ex]
                              &  $RE_2$& 1.002 & 1.013 & 0.993 & 0.959 & 1.494  \\ 
$\{\{1,2\},\{3,4,5,6\}\}$     &  $RE_1$& 1.040 & 1.198 & 1.361 & 1.547 & 1.597  \\[-1.7ex] 
                              &  $RE_2$& 1.020 & 1.094 & 1.058 & 0.980 & 0.945  \\ 
$\{\{1\},\{2,3,4,5,6\}\}$     &  $RE_1$& 1.137 & 1.796 & 3.170 & 4.748 & 8.175  \\ [-1.7ex]
                              &  $RE_2$& 1.120 & 1.654 & 2.467 & 3.021 & 4.859  \\\hline 
$\{\{1\},\{2\},\{3,4,5,6\}\}$ &  $RE_1$& 1.071 & 1.485 & 2.196 & 2.927 & 3.389  \\ [-1.7ex]
                              &  $RE_2$& 1.052 & 1.331 & 1.599 & 1.688 & 1.374  \\ 
$\{\{1\},\{2,3\},\{4,5,6\}\}$ &  $RE_1$& 1.169 & 1.444 & 2.259 & 3.261 & 5.810  \\ [-1.7ex]
                              &  $RE_2$& 1.139 & 1.263 & 1.550 & 1.829 & 2.301  \\ 
$\{\{1\},\{2,3,4\},\{5,6\}\}$ &  $RE_1$& 1.120 & 1.385 & 2.513 & 3.620 & 5.900  \\ [-1.7ex]
                              &  $RE_2$& 1.079 & 1.228 & 1.738 & 2.063 & 2.411  \\ 
$\{\{1\},\{2,3,4,5\},\{6\}\}$ &  $RE_1$& 1.204 & 2.039 & 4.263 & 7.090 & 16.439  \\ [-1.7ex]
                              &  $RE_2$& 1.186 & 1.787 & 3.018 & 3.962 & 6.604  \\ 
$\{\{1,2\},\{3,4,5\},\{6\}\}$ &  $RE_1$& 1.038 & 1.544 & 2.484 & 3.604 & 5.734  \\ [-1.7ex]
                              &  $RE_2$& 1.004 & 1.373 & 1.761 & 2.023 & 2.278  \\ 
$\{\{1,2\},\{3,4\},\{5,6\}\}$ &  $RE_1$& 1.032 & 1.158 & 1.453 & 1.865 & 3.785  \\ [-1.7ex]
                              &  $RE_2$& 1.005 & 1.025 & 1.036 & 1.045 & 1.513  \\ 
$\{\{1,2,3\},\{4\},\{5,6\}\}$ &  $RE_1$& 0.979 & 0.923 & 0.918 & 1.129 & 2.809  \\ [-1.7ex]
                              &  $RE_2$& 0.932 & 0.813 & 0.652 & 0.642 & 1.127 \\ 
$\{\{1,2,3\},\{4,5\},\{6\}\}$ &  $RE_1$& 0.994 & 0.939 & 0.946 & 1.089 & 2.874  \\ [-1.7ex]
                              &  $RE_2$& 0.961 & 0.845 & 0.681 & 0.606 & 1.143  \\ 
$\{\{1,2,3,4\},\{5\},\{6\}\}$ &  $RE_1$& 1.086 & 1.378 & 2.178 & 2.955 & 3.463 \\ [-1.7ex]
                              &  $RE_2$& 1.077 & 1.203 & 1.553 & 1.685 & 1.386  \\ 
   \hline
\end{tabular}
\label{ta:unb6}}}
\end{table}
}

 \begin{lemma} \label{le:f-uimp}
Let $Y_{r,i}=X_{[d_r]i}$,   $r=1,\ldots, n_i; i=1,\ldots,N$, be observed from a continuous  distribution with pdf  $f(\cdot; \dt)$ using  an unbalanced PROS sampling design.  Suppose  $f_{[r;m_{ri}]}(\cdot;\dt)$ and $g_{ri}(\cdot;\dt)$ are  defined 
as in Lemma \ref{le-frm} and \eqref{u-gri}, respectively. Under the regularity conditions \BL{  of {\rm \cite{chen-bai-sinha}}}, we have

\begin{itemize}

\item [(i)] 
 $\sum_{i=1}^{N}\sum_{r=1}^{n_i}E
\left\{
\frac{D^2_{\dt}g_{ri}(X_{[d_r]i};\dt)}{g_{ri}(X_{[d_r]i};\dt)}
\right\}=
\sum_{i=1}^{N}\sum_{r=1}^{n_i} E
\left\{
{D^2_{\dt}g_{ri}(X;\dt)}
\right\},
$
\item [(ii)]
$
\sum_{i=1}^{N}\sum_{r=1}^{n_i} E
\left\{
\frac{[D_{\dt} g_{ri}(X_{[d_r]i};\dt)][D_{\dt} g_{ri}(X_{[d_r]i};\dt)]^{\top}}{g_{ri}^2(X_{[d_r]i};\dt)}
\right\}
= \BL{ \sum_{i=1}^{N}\sum_{r=1}^{n_i}}
E
\left\{
\frac{[D_{\dt} g_{ri}(X;\dt)][D_{\dt} g_{ri}(X;\dt)]^{\top}}{g_{ri}(X;\dt)}
\right\}.
$
\end {itemize}
\end{lemma}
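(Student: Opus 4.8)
The plan is to prove both identities term by term, using the factorization $f_{[r;m_{ri}]}(y;\dt)=f(y;\dt)\,g_{ri}(y;\dt)$ obtained just before the statement of the lemma when rewriting the likelihood \eqref{l-uimp}. This is the unbalanced analogue of the relation $f_{[d_r]}=f\,g_r$ exploited in Lemma \ref{le:f-imp}, so the argument parallels the (omitted) proof of parts (iii)--(iv) of that lemma; under the regularity conditions of \cite{chen-bai-sinha}, differentiation and integration may be interchanged when forming $D_{\dt}g_{ri}$ and $D^2_{\dt}g_{ri}$.

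For part (i), since $Y_{ri}=X_{[d_r]i}$ has density $f_{[r;m_{ri}]}$, I would write each left-hand summand as an integral against $f_{[r;m_{ri}]}$ and insert the factorization; the single factor $g_{ri}$ supplied by $f_{[r;m_{ri}]}=f\,g_{ri}$ then cancels the $g_{ri}$ in the denominator, leaving $\int D^2_{\dt}g_{ri}(y;\dt)\,f(y;\dt)\,dy=E\{D^2_{\dt}g_{ri}(X;\dt)\}$, with the expectation now taken with respect to $X\sim f$. Part (ii) is identical: the factorization cancels exactly one of the two powers of $g_{ri}$ in the denominator, so the summand against $f_{[r;m_{ri}]}$ collapses to $E\{[D_{\dt}g_{ri}(X;\dt)][D_{\dt}g_{ri}(X;\dt)]^{\top}/g_{ri}(X;\dt)\}$. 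Summing over $r=1,\ldots,n_i$ and $i=1,\ldots,N$ in each case yields the stated identities.

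There is no genuine obstacle here; the only point worth flagging is that, in contrast with the balanced case of Lemma \ref{le:f-imp}(iii), the right-hand side of part (i) does \emph{not} simplify to zero. Indeed, column-stochasticity $\sum_{r}\alpha_{[d_r,d_h]i}=1$ gives $\sum_{r}g_{ri}(y)=\sum_{h}\tfrac{S}{m_{hi}}\sum_{u\in d_{h[h]i}}\binom{S-1}{u-1}[F(y;\dt)]^{u-1}[1-F(y;\dt)]^{S-u}$, which fails to be constant in $y$ once the subsets have unequal sizes, because the weights $S/m_{hi}$ differ across $h$; hence $\sum_{r}D^2_{\dt}g_{ri}\neq 0$ in general and the term $\sum_{i,r}E\{D^2_{\dt}g_{ri}(X;\dt)\}$ genuinely survives. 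The remaining care is the strict positivity of $g_{ri}$ on the support of $f$, which legitimizes the cancellation and follows from $f_{[r;m_{ri}]}>0$ wherever $f>0$ via Lemma \ref{le-frm}.
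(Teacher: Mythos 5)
Your proof is correct and is exactly the argument the paper intends (the paper leaves this proof to the reader, as it does for the balanced analogue, Lemma \ref{le:f-imp}): each summand on the left is an integral against the density $f_{[r;m_{ri}]}=f\,g_{ri}$, and the factor $g_{ri}$ supplied by this factorization cancels one power of $g_{ri}$ in the denominator, leaving an expectation taken with respect to $X\sim f(\cdot;\dt)$. Your side remark that, unlike Lemma \ref{le:f-imp}(iii), the right-hand side of (i) does not collapse to zero --- because $\sum_{r}g_{ri}$ is no longer constant in $y$ once the subset sizes $m_{hi}$ differ --- is also correct and is consistent with the extra term $-\sum_{i}\sum_{r}E\{D^2_{\dt}g_{ri}(X;\dt)\}$ retained in Theorem \ref{th:fi-uimp}.
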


aNow, we can present the main result of tho section as follows.

\begin{theorem}\label{th:fi-uimp}
Under the conditions of Lemma \ref{le:f-uimp}, the FI matrix of an  unbalanced PROS sample about 
unknown parameters $\Omega=(\balpha,\dt)$  is given by  
\begin{eqnarray*}\label{th:fis-imp}
\I_{upros}(\Omega)
=\I_{srs}(\dt)
- \sum_{i=1}^{N}\sum_{r=1}^{n_i} E
\left\{
{D^2_{\dt}g_{ri}(X;\dt)}
\right\}
+\sum_{i=1}^{N}\sum_{r=1}^{n_i} 
E
\left\{
\frac{[D_{\dt} g_{ri}(X;\dt)][D_{\dt} g_{ri}(X;\dt)]^{\top}}{g_{ri}(X;\dt)}
\right\}.
\end{eqnarray*}
\end{theorem}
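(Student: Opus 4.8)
The plan is to compute $\I_{upros}(\Omega)$ straight from the definition $\I(\dt)=-\E[D^2_{\dt}\log L(\Omega)]$, reusing the mechanism of the proofs of Theorems~\ref{th:fi-pros-srs} and~\ref{th:fi-imp}. Writing $Y_{ri}=X_{[d_r]i}$ and using the factorization $L(\Omega)=\prod_{i=1}^{N}\prod_{r=1}^{n_i}f(y_{ri};\dt)\,g_{ri}(y_{ri};\dt)$, with $g_{ri}$ as in~\eqref{u-gri}, the log-likelihood splits additively, and I would first differentiate twice in $\dt$, using
\[
D^2_{\dt}\log g_{ri}=\frac{D^2_{\dt}g_{ri}}{g_{ri}}-\frac{[D_{\dt}g_{ri}][D_{\dt}g_{ri}]^{\top}}{g_{ri}^{2}} .
\]
This writes $-D^2_{\dt}\log L(\Omega)$ as three sums over $(i,r)$: an $f$-part $-\sum_{i,r}D^2_{\dt}\log f(Y_{ri};\dt)$, a middle part $-\sum_{i,r}D^2_{\dt}g_{ri}/g_{ri}$, and an outer-product part $+\sum_{i,r}[D_{\dt}g_{ri}][D_{\dt}g_{ri}]^{\top}/g_{ri}^{2}$.

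Next I would take expectations term by term. The key structural fact is that each measured $Y_{ri}$ has marginal density $f_{[r;m_{ri}]}=f\,g_{ri}$ (Lemma~\ref{le-frm} together with~\eqref{f-imp}), so in the middle and outer-product sums the factor $g_{ri}$ coming from the density cancels one power of $g_{ri}$ in the denominator and turns an expectation under $f_{[r;m_{ri}]}$ into one under $f$. This is exactly the content of Lemma~\ref{le:f-uimp}: part~(i) converts the middle part into $-\sum_{i,r}\E_X\{D^2_{\dt}g_{ri}(X;\dt)\}$, and part~(ii) converts the outer-product part into $\sum_{i,r}\E_X\{[D_{\dt}g_{ri}(X;\dt)][D_{\dt}g_{ri}(X;\dt)]^{\top}/g_{ri}(X;\dt)\}$, which are the second and third summands of the statement. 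It is worth stressing the contrast with the balanced design: there $\sum_{r}g_{r}\equiv n$ is constant in $\dt$, so $\sum_{r}\E\{D^2_{\dt}g_{r}/g_{r}\}=0$ by Lemma~\ref{le:f-imp}(iii) and the middle term vanishes; here $\sum_{r}g_{ri}$ is genuinely non-constant in $\dt$, which is precisely why the term $-\sum_{i,r}\E_X\{D^2_{\dt}g_{ri}(X;\dt)\}$ does not drop out.

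The step I expect to be the main obstacle is the $f$-part, which after taking expectations becomes
\[
-\sum_{i,r}\int f_{[r;m_{ri}]}(y;\dt)\,D^2_{\dt}\log f(y;\dt)\,dy=-\int f(y;\dt)\Bigl[\textstyle\sum_{i,r}g_{ri}(y;\dt)\Bigr]D^2_{\dt}\log f(y;\dt)\,dy,
\]
and which the statement identifies with $\I_{srs}(\dt)$. This identification is the unbalanced counterpart of Lemma~\ref{le:f-imp}(i)--(ii) and hinges on controlling $\sum_{i,r}g_{ri}$ against the sample size $K=\sum_{i}n_i$: summing $\alpha_{[d_r,d_h]i}$ over $r$ by double stochasticity collapses the misplacement weights, after which one sums the order-statistic weights over the partition $\{d_{h[h]i}\}$ of $\{1,\dots,S\}$. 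The delicate point is how the unequal subset sizes $m_{hi}$ enter these weights, and checking that they combine to reproduce $\I_{srs}(\dt)$ is where the real work lies; once this is settled, collecting the three pieces gives the displayed decomposition. As in Theorem~\ref{th:fi-imp}, the remaining blocks of the information matrix for $\Omega=(\dt,\balpha)$ are obtained analogously by differentiating in $\balpha$.
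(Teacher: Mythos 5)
Your architecture matches what the paper intends (the paper itself supplies no proof of Theorem~\ref{th:fi-uimp}, only the pattern of Theorems~\ref{th:fi-pros-srs} and~\ref{th:fi-imp}): split $\log L$ into an $f$-part and a $\log g_{ri}$-part, expand $D^2_{\dt}\log g_{ri}$, and use Lemma~\ref{le:f-uimp} to convert expectations under $f_{[r;m_{ri}]}=f\,g_{ri}$ into expectations under $f$; your observation that the middle term survives here because $\sum_r g_{ri}$ is no longer constant is exactly right. The problem is the step you yourself flag as ``where the real work lies'' and then leave open: it cannot be closed as stated. After summing $\alpha_{[d_r,d_h]i}$ over $r$ by double stochasticity you get $\sum_{r=1}^{n_i}g_{ri}(y;\dt)=\sum_{h=1}^{n_i}\frac{1}{m_{hi}}\sum_{u\in d_{h[h]i}}f^{(u:S)}(y;\dt)/f(y;\dt)$. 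This integrates to $n_i$ against $f$, but it is constant in $y$ only when all the $m_{hi}$ are equal, i.e.\ in the balanced case. For a genuinely unbalanced design it is a nondegenerate function of $F(y;\dt)$: with $S=3$, perfect subsetting and $D=\{\{1\},\{2,3\}\}$ one gets $3(1-F)^2+\tfrac32(2F-F^2)=3-3F+\tfrac32F^2$, which runs from $3$ down to $3/2$ rather than sitting at $K=2$. Consequently the $f$-part equals $-\int f(y;\dt)\bigl[\sum_{i,r}g_{ri}(y;\dt)\bigr]D^2_{\dt}\log f(y;\dt)\,dy$, and this coincides with $\I_{srs}(\dt)$ only when $D^2_{\dt}\log f$ does not depend on $y$ (a normal location parameter, an exponential rate), not in general; for a scale parameter the discrepancy is a nonzero weighted second moment.

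So the gap is not merely an unchecked computation: the identification of the first term with $\I_{srs}(\dt)$ fails for unequal subset sizes, and your closing claim that ``once this is settled, collecting the three pieces gives the displayed decomposition'' is too optimistic. A correct statement would either restrict to equal $m_{hi}$ (collapsing back to Theorem~\ref{th:fi-imp}) or replace $\I_{srs}(\dt)$ by the weighted quantity $-\sum_{i=1}^{N}\sum_{r=1}^{n_i}\E\{g_{ri}(X;\dt)\,D^2_{\dt}\log f(X;\dt)\}$. It is telling that the paper's own Lemma~\ref{le:f-uimp} quietly omits the analogues of parts (i) and (ii) of Lemma~\ref{le:f-imp}, which are precisely the identities your argument would need. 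Everything else in your outline --- the handling of the middle and outer-product terms via Lemma~\ref{le:f-uimp} and the contrast with the balanced case --- is correct.
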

\noindent 
 Table \ref{ta:unb6} shows  the FI content of unbalanced PROS samples compared with their  SRS  and RSS counterparts in the case of normal distribution and when $N=1$, $S=6$ and three subsets $n=3$ of different sizes have been declared. The misplacement ranking error models are obtained following  the model proposed in \cite{dell1972ranked} when  $\rho\in\{0.25,0.5,0.75,0.9,1\}$.


\nocite{*}


\begin{thebibliography}{}

\bibitem[\protect\astroncite{Arslan and Ozturk}{2013}]{Arsl}
{\sc Arslan, G. and Ozturk, O.} 2013.
\newblock Parametric inference based on partially rank ordered set samples.
\newblock {\em Journal of The Indian Statistical Association} 51:\BL{1--24}.

\bibitem[\protect\astroncite{Barabesi and El-Sharaawi}{2001}]{Bara-Elsh-2001}
{\sc Barabesi, L. and El-Sharaawi, A.} 2001.
\newblock The efficiency of ranked set sampling for parameter estimation.
\newblock {\em Statistics and Probability Letters} 53:189--199.

\bibitem[\protect\astroncite{Barreto and Barnett}{1999}]{barreto1999best}
{\sc Barreto, M. C.~M. and Barnett, V.} 1999.
\newblock Best linear unbiased estimators for the simple linear regression
  model using ranked set sampling.
\newblock {\em Environmental and Ecological Statistics} 6:119--133.

\bibitem[\protect\astroncite{Chen}{2000}]{chen2000efficiency}
{\sc Chen, Z.} 2000.
\newblock The efficiency of ranked-set sampling relative to simple random
  sampling under multi-parameter families.
\newblock {\em Statistica Sinica} 10:247--264.

\bibitem[\protect\astroncite{Chen et~al.}{2004}]{chen-bai-sinha}
{\sc Chen, Z., Bai, Z., and Sinha, B.} 2004.
\newblock \BL{ Ranked set sampling: theory and applications}, volume \BL{176}.
\newblock \BL{ Springer}.

\bibitem[\protect\astroncite{Chen and Wang}{2004}]{Chen-Wang-04-Biom}
{\sc Chen, Z. and Wang, Y.-G.} 2004.
\newblock Efficient regression analysis with ranked-set sampling.
\newblock {\em Biometrics} 60:997--1004.

\bibitem[\protect\astroncite{Dell and Clutter}{1972}]{dell1972ranked}
{\sc Dell, T. and Clutter, J.} 1972.
\newblock Ranked set sampling theory with order statistics background.
\newblock {\em Biometrics} pp. 545--555.

\bibitem[\protect\astroncite{Frey}{2012}]{frey}
{\sc Frey, J.} 2012.
\newblock Nonparametric mean estimation using partially ordered sets.
\newblock {\em Environmental and Ecological Statistics} 19:309--326.

\bibitem[\protect\astroncite{Hatefi and Jafari~Jozani}{2013}]{Hatefi201316}
{\sc Hatefi, A. and Jafari~Jozani, M.} 2013.
\newblock Fisher information in different types of perfect and imperfect ranked
  set samples from finite mixture models.
\newblock {\em Journal of Multivariate Analysis} \BL{119}:\BL{16--31}.

\bibitem[\protect\astroncite{Hatefi et~al.}{2015}]{Hate-Joza-Oztu-13}
{\sc Hatefi, A., Jafari~Jozani, M., and Oztuk, O.} 2015.
\newblock Mixture model analysis of partially rank ordered set samples:
  Estimating the age-groups of fish from length-frequency data.
\newblock {\em Scandinavian Journal of Statistics. To appear}.

\bibitem[\protect\astroncite{Hatefi et~al.}{2014}]{Hate-13-rss-fmm}
{\sc Hatefi, A., Jozani, M.~J., and Ziou, D.} 2014.
\newblock Estimation and classification for finite mixture models under ranked
  set sampling.
\newblock {\em Statistica Sinica} \BL{24}:\BL{675--698}.

\bibitem[\protect\astroncite{Hill}{1963}]{hill1963information}
{\sc Hill, B.~M.} 1963.
\newblock Information for estimating the proportions in mixtures of exponential
  and normal distributions.
\newblock {\em Journal of the American Statistical Association} 58:918--932.

\bibitem[\protect\astroncite{Jafari~Jozani and Ahmadi}{2014}]{Joza-Ahma-2013}
{\sc Jafari~Jozani, M. and Ahmadi, J.} 2014.
\newblock On uncertainty and information properties of ranked set samples.
\newblock {\em Information Sciences} 260:1--16.

\bibitem[\protect\astroncite{Johnson}{2004}]{John-2004}
{\sc Johnson, O.} 2004.
\newblock Information Theory And The Central Limit Theorem.
\newblock Imperial College Press.

\bibitem[\protect\astroncite{Lehmann and Casella}{1998}]{lehmann1998theory}
{\sc Lehmann, E.~L. and Casella, G.} 1998.
\newblock Theory of point estimation, volume~31.
\newblock Springer.

\bibitem[\protect\astroncite{Mode et~al.}{1999}]{mode}
{\sc Mode, N.~A., Conquest, L.~L., and Marker, D.~A.} 1999.
\newblock Ranked set sampling for ecological research: accounting for the total
  costs of sampling.
\newblock {\em Environmetrics} 10:179--194.

\bibitem[\protect\astroncite{Muttlak and McDonald}{1992}]{muttlak}
{\sc Muttlak, H.~A. and McDonald, L.~L.} 1992.
\newblock Ranked set sampling and the line intercept method: A more efficient
  procedure.
\newblock {\em Biometrical Journal} 34:329--346.

\bibitem[\protect\astroncite{Ozturk}{2011}]{Oztu-11-ees}
{\sc Ozturk, O.} 2011.
\newblock Sampling from partially rank-ordered sets.
\newblock {\em Environmental and Ecological Statistics} 18:757--779.

\bibitem[\protect\astroncite{Ozturk}{2013}]{CJS:CJS11167}
{\sc Ozturk, O.} 2013.
\newblock Combining multi-observer information in partially rank-ordered
  judgment post-stratified and ranked set samples.
\newblock {\em Canadian Journal of Statistics} \BL{41}:\BL{304--324}.

\bibitem[\protect\astroncite{Ozturk et~al.}{2005}]{ozturk-bilgin}
{\sc Ozturk, O., Bilgin, O.~C., and Wolfe, D.~A.} 2005.
\newblock Estimation of population mean and variance in flock management: a
  ranked set sampling approach in a finite population setting.
\newblock {\em Journal of Statistical Computation and Simulation} 75:905--919.

\bibitem[\protect\astroncite{Wang et~al.}{2009}]{Wang}
{\sc Wang, Y.-G., Ye, Y., and Milton, D.~A.} 2009.
\newblock Efficient designs for sampling and subsampling in fisheries research
  based on ranked sets.
\newblock {\em ICES Journal of Marine Science: Journal du Conseil} 66:928--934.

\bibitem[\protect\astroncite{Wolfe}{2012}]{Wolfe-12}
{\sc Wolfe, D.~A.} 2012.
\newblock Ranked set sampling: Its relevance and impact on statistical
  inference.
\newblock {\em ISRN Probability and Statistics} 2012:1--32.

\end{thebibliography}
\end{document}